\title{A McShane-type identity for closed surfaces}
\author{\textsc{Yi Huang}\\
\textit{Department of}\\ 
\textit{Mathematics and Statistics}\\ 
\textit{University of Melbourne}\\
\href{mailto:huay@ms.unimelb.edu.au}{\texttt{huay@ms.unimelb.edu.au}}
}
\date{}
\newtheorem{thm}{Theorem}
\newtheorem{por}[thm]{Porism}
\theoremstyle{definition}
\theoremstyle{remark}
\theoremstyle{plain}
\newtheorem{lem}[thm]{Lemma}
\newtheorem{cor}[thm]{Corollary}
\begin{document}
\maketitle
\parindent = 0cm
\small

\begin{abstract}
\noindent
We prove a McShane-type identity--- a series, expressed in terms of geodesic lengths, that sums to $2\pi$ for any closed hyperbolic surface with one distinguished point. To do so, we prove a generalized Birman-Series theorem showing that the set of complete geodesics on a hyperbolic surface with large cone-angles is sparse.
\end{abstract}

\section*{Introduction}
\pagestyle{empty}
In his PhD thesis \cite{mcshane_thesis}, McShane obtained the following beautiful identity summed over the collection $\mathcal{C}$ of all simple closed geodesics on a hyperbolic one cusped torus $S_{1,1}$:
\begin{align*}
\sum_{\gamma\in\mathcal{C}} \frac{1}{1+\exp{\ell\gamma}}=\frac{1}{2},
\end{align*}
where $\ell{\gamma}$ denotes the hyperbolic length of the closed geodesic $\gamma$. By doubling both sides, these summands may be interpreted in terms of probability: cutting $S_{1,1}$ along any simple closed geodesic $\gamma\in\mathcal{C}$ gives us a pair of pants $P_{\gamma}$, and $\frac{2}{1+\exp{\ell\gamma}}$ is the chance that a geodesic shooting out from the cusp in $S_{1,1}$ hits itself before leaving $P_{\gamma}$. These probabilities sum to $1$ since the Birman-Series theorem \cite{birman_series} on the sparsity of complete geodesics informs us that almost all geodesics are self-intersecting. In subsequent papers \cite{mcshane_allcusps, mcshane_weir}, this work was extended to include identities at the Weierstrass point of the 1-cusped torus 
and for surfaces with more cusps and genera. 
\newline
\newline
Mirzakhani then generalized these identities to hyperbolic surfaces with geodesic boundary \cite{mirz_simp}, and used them to unfold the volume of a moduli space of bordered Riemann surfaces over topologically simpler moduli spaces. In so doing, she obtained explicit recursions for the Weil-Petersson volumes of these moduli spaces. She then used symplectic reduction to interpret these volumes as intersection numbers \cite{mirz_proof}, and derived a new proof for (most of) Witten's conjecture \cite{witten_conjecture}. 
\newline
\newline
Almost concurrently, Tan-Wong-Zhang \cite{tan_zhang_cone} independently derived Mirzakhani's generalized McShane identities for bordered surfaces. They also obtained such identities for hyperbolic surfaces with cone points with angles $\leq\pi$, extending the Birman-Series sparsity theorem out of necessity. In particular, they observed that the cone point identities are analytic continuations of the geodesic boundary identities, where real boundary lengths are replaced with imaginary ones. Their cone-angle constraint is because certain pairs of pants fail to exist for angles $>\pi$.
\newline
\newline
Until fairly recently, the only McShane identity known for a closed hyperbolic surface was the following result taken from \cite{mcshane_genus2}: let $\mathcal{A}$ be the set of all pairs $(\gamma_1,\gamma_2)$ of disjoint simple closed geodesics on a genus-2 surface $S_2$ so that $\gamma_1$ is separating and $\gamma_2$ is non-separating, then:
\begin{align}
\sum_{(\gamma_1,\gamma_2)\in\mathcal{A}} \mathrm{arctan}\exp\left(-\frac{\ell\gamma_1}{4}-\frac{\ell\gamma_2}{2}\right)=\frac{3\pi}{2}\label{genus2},
\end{align}

Like their cusped case cousins, these sums may be interpreted by classifying geodesic arcs emanating from the images of the six Weierstrass points in the quotient surface of $S_{2}$ under its hyperelliptic involution. During a 2010 conference talk \cite{tan_talk} at the National University of Singapore, Tan outlined how one might obtain such an identities for closed surfaces with one marked point by shooting out in opposite directions at the same speed from a fixed point. Although the relevant summands for such an identity are hard to obtain \cite{tan_chat}, Luo and Tan have computed the integral over the entire surface as one varies the marked point \cite{luo_tan}. The resulting identity is expressed in terms of dilogarithms --- much like Bridgeman's orthospectrum identity \cite{orthospectra}.
\newline
\newline
In this paper, we deduce an identity for closed hyperbolic surfaces $S$ equipped with a marked point $p$ of the form:
\begin{align*}
\sum_{P\in\mathcal{HP}(S,p)}\mathrm{Gap}(P)=2\pi,
\end{align*}
where the function $\mathrm{Gap}$ depends on the geometry of immersed half-pants on $S$. The main difference between this identity and its predecessors is that we categorize geodesics emanating from $p$ by the lasso-induced geodesic half-pants in which they lie. 
This is all made possible by first extending the Birman-Series theorem.
\newline
\newline
Please note that we often implicitly invoke existence proofs for unique geodesic representatives of a essential homotopy class on a hyperbolic surface. One version that suits our purposes may be found in Lemma 7.3 of \cite{mondello_cone}, which essentially states that even for hyperbolic surfaces with large cone-angles, such a geodesic representative still exists and is unique, but may be broken at cone points with angle $\geq\pi$. Moreover, although we invoke the Gauss-Bonnet theorem at times, in practice all we need to know is that geodesic monogons and bigons do not exist in the hyperbolic world, and that the area of a geodesic triangle is equal to $\pi$ minus its three internal angles.

\section{Birman-Series theorem}
\pagestyle{plain}

Our extended Birman-Series theorem on the sparsity of geodesics may be stated as follows:
\begin{thm}\label{bf_extended}
Given any complete finite-volume hyperbolic surface $S$ with a finite collection $C$ of cone points, fix an integer $k$. Then the points constituting all complete hyperbolic geodesics possibly broken at $C$ with at most $k$ intersections is nowhere dense on $S$ and has Hausdorff dimension 1.
\end{thm}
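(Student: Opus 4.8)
The plan is to run a covering argument along the lines of Birman--Series, adding two ingredients to handle $k$-fold self-intersection and passage through cone points of large angle. Throughout write $\Gamma$ for the subset of $S$ in the statement. First I would reduce to the compact case: a maximal subarc of a complete geodesic lying in a cusp region $\{\operatorname{Im}z\ge h\}/(z\mapsto z+\lambda)$ is an $\mathbb{H}$-geodesic with both endpoints on the bounding horocycle, and such an arc of horizontal width $w$ acquires at least $\lfloor w/\lambda\rfloor$ self-intersections once it wraps around the cusp; hence a geodesic with at most $k$ self-intersections penetrates each cusp only to a bounded depth, so after truncating all cusps no point of $\Gamma$ is lost and we may assume $S$ is compact with finite cone locus $C$. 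Fix a geodesic triangulation $\mathcal{T}$ of $S$ whose vertex set contains $C$, and code each complete geodesic $\gamma$ possibly broken at $C$ by its \emph{cutting sequence}: the bi-infinite word that records successively the edges of $\mathcal{T}$ crossed transversally by $\gamma$, with a marked letter naming the relevant cone point whenever $\gamma$ runs through a vertex.

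The first step, which I expect to be the main obstacle, is a combinatorial estimate: there is a polynomial $P$ such that at most $P(n)$ distinct words of length $2n$ occur as subwords of cutting sequences of complete geodesics broken at $C$ with at most $k$ self-intersections. The crucial structural point is that such a $\gamma$ meets $C$ at most $|C|+k$ times, since a visit to a cone point beyond the first at that point is itself a self-intersection. Cutting $\gamma$ at its at most $k$ self-intersection points and at its at most $|C|+k$ cone-point visits therefore expresses it as a concatenation of a bounded number $M=M(S,k)$ of \emph{simple} geodesic arcs avoiding $C$; accordingly a length-$2n$ subword of its cutting sequence is a concatenation of at most $M$ subwords of cutting sequences of such simple arcs, glued along at most $|C|+k$ cone-point markers. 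For simple arcs avoiding $C$ the Birman--Series non-braiding analysis is purely topological and applies verbatim to $(S,\mathcal{T})$, bounding the number of length-$m$ subwords polynomially in $m$; concatenating boundedly many of these and selecting boundedly many markers from the finite set $C$ keeps the count polynomial in $n$.

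The second step is a geometric fellow-travelling estimate: two complete geodesics broken at $C$ whose marked cutting sequences agree on a block of $2n$ letters pass within $Ce^{-cn}$ of one another at the midpoint of that block. Split the block at its at most $|C|+k$ cone-point markers into subruns crossing no vertices; the midpoint lies in exactly one of them. On each subrun the usual $\mathrm{CAT}(-1)$ contraction, obtained after developing the corresponding rigid chain of triangles into $\mathbb{H}$, gives exponential convergence in the length of the subrun. The new point needed for large cone angles is that a subrun which \emph{ends} at a cone point $c$ is pinned down irrespective of its length, because $c$ is a fixed point of $S$ and a geodesic segment is determined by its two endpoints: thus a subrun flanked by cone points is rigid, a subrun with one cone-point end is pinned by that anchor together with the exponential convergence radiating from it, and a long clean subrun containing the midpoint supplies the exponential gain outright --- and one of these always occurs at the midpoint.

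Finally I would assemble the two lemmas in the usual way. Fix a triangle $\Delta\in\mathcal{T}$. By the combinatorial and geometric estimates, the set of chords of $\Delta$ extending to complete broken geodesics with at most $k$ self-intersections is covered, for every $n$, by $P(n)$ subsets of the $2$-dimensional space of chords, each of diameter at most $Ce^{-cn}$; so this chord set has zero upper box dimension, and the union of the corresponding segments --- which contains $\Gamma\cap\Delta$ --- can be covered by $\mathrm{const}\cdot\delta^{-1}(\log(1/\delta))^{O(1)}$ balls of radius $\delta$, hence has upper box dimension at most $1$. Taking the finite union over the triangles of $\mathcal{T}$, $\Gamma$ has upper box dimension at most $1$; in particular $\Gamma$ is Lebesgue-null, so $\overline{\Gamma}$ has empty interior and $\Gamma$ is nowhere dense. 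Conversely $\dim_H\Gamma\ge 1$ because $\Gamma$ contains a simple closed geodesic, so $\dim_H\Gamma=1$, as claimed.
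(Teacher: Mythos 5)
Your overall scheme---a polynomial bound on codings, exponential fellow-travelling, and a covering argument---is the same as the paper's, and several of your ingredients are attractive alternatives to the paper's: reducing to the compact case by truncating cusps, bounding the number of cone-point visits by $|C|+k$ and concatenating simple subarcs for the combinatorial count, and pinning down subruns that terminate at cone points (since a cone point is a fixed anchor). But there is a genuine gap in the fellow-travelling step, and it is precisely where the cone-angle geometry enters the problem.

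You assert that on a clean subrun the $\mathrm{CAT}(-1)$ contraction ``gives exponential convergence in the length of the subrun,'' and in the assembly step you use this to claim that each length-$2n$ cutting-sequence class has diameter $Ce^{-cn}$. This conflates combinatorial length (number of letters) with geometric length. The missing step is the estimate that a geodesic arc with at most $k$ self-intersections that crosses $n$ edges of $\mathcal{T}$ has geodesic length at least $\alpha n$; this is exactly the paper's Lemma~\ref{linear}, and it is \emph{not} automatic. A geodesic arc can wind repeatedly near a vertex of $\mathcal{T}$ without ever hitting it, producing arbitrarily many letters with bounded geometric length, and without hyperbolic contraction across that block. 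Your decomposition into subruns splits only at cone-point \emph{visits}, so a subrun that spirals near (but not through) a cone point is treated as ``clean'' even though it fails to be geometrically long. Birman--Series ($\theta=0$) and Tan--Wong--Zhang ($0<\theta<\pi$) already showed that this spiralling forces many self-intersections, so it cannot persist for a $k$-bounded geodesic; the paper's new point, and the reason the theorem was open for large cone angles, is that when $\theta\geq\pi$ a priori this obstruction disappears. The paper's Gauss--Bonnet argument closes this: if $\bar\gamma$ crosses an edge $e$ at $c$ at least $2k+1$ times, then $\bar\gamma$ and $e$ cut out a hyperbolic triangle whose angle at $c$ is forced to be $<\pi$ regardless of the ambient cone angle, putting one back in the Tan--Wong--Zhang regime and producing $k+1$ self-intersections. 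Your proposal has no analogue of this step, and without it the asserted $Ce^{-cn}$ diameter bound---and hence the whole Hausdorff-dimension computation---is unsupported.

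Two smaller points. First, cutting a geodesic at its self-intersections yields simple arcs whose endpoints lie in the interior of triangles rather than on $\partial\mathcal{T}$; the Birman--Series non-braiding count is stated for boundary-to-boundary arcs, so ``applies verbatim'' needs a (routine) adjustment. Second, your midpoint bookkeeping needs care: when you split a $2n$-block at up to $|C|+k$ markers, the subrun containing the midpoint may be short even if some other subrun is long; one should instead locate the pinned/long subrun and transport the contraction estimate from there, as the paper effectively does by working with middle segments of $(2n+1)$-segment arcs.
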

Note that an immediate corollary of the collection of complete simple geodesics on $S$ being Hausdorff dimension $1$ is that it has zero Lebesgue measure.
\newline
\newline
Our extended result differs from previous ones \cite{birman_series, tan_zhang_cone} in that we allow for cone-angles and broken geodesics. In particular, the fact that a cone point with angle $2\pi$ is equivalent to a marked point gives us the following corollary:
\begin{cor}
Given a complete finite-volumed hyperbolic surface $S$ and a countable collection of points $C\subset S$, the set of points which lie on geodesics broken at finitely many points in $C$ with finitely many self-intersections has zero Lebesgue measure.
\end{cor}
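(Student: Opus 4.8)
The plan is to deduce the corollary from Theorem~\ref{bf_extended} by a routine countable-union argument, the one substantive observation being that a marked point is the same thing as a cone point of angle $2\pi$: thus enlarging the cone locus by finitely many marked points changes neither the hyperbolic structure nor the finite-volume hypothesis, and keeps us within the hypotheses of Theorem~\ref{bf_extended}.

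First I would enumerate $C=\{c_1,c_2,\ldots\}$ and set $C_n=\{c_1,\ldots,c_n\}$, declaring each $c_i$ to be a cone point of cone-angle $2\pi$, so that $S$ with this cone data is geometrically unchanged. Writing $X$ for the set in the statement --- the points lying on complete geodesics broken at finitely many points of $C$ with finitely many self-intersections --- I would observe that every such geodesic is, for some $n$ and some positive integer $k$, a complete geodesic possibly broken at $C_n$ with at most $k$ intersections, and conversely every point of such a geodesic lies in $X$. Hence $X=\bigcup_{n\ge 1}\bigcup_{k\ge 1} X_{n,k}$, where $X_{n,k}$ denotes the set of points constituting all complete geodesics possibly broken at $C_n$ with at most $k$ intersections.

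Next I would apply Theorem~\ref{bf_extended} to the complete finite-volume hyperbolic surface $S$ equipped with the finite cone set $C_n$ and the integer $k$: this gives that each $X_{n,k}$ is nowhere dense and of Hausdorff dimension $1$, hence, as remarked immediately after the theorem, of zero Lebesgue measure. Finally, since $C$ is countable the index set of pairs $(n,k)$ is countable, so $X$ is a countable union of Lebesgue-null sets and is therefore itself Lebesgue-null.

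There is no real obstacle once Theorem~\ref{bf_extended} is available; the only place care is needed --- and the only place the cone-angle generality of the theorem is genuinely exploited --- is the identification of a marked point with a $2\pi$ cone point, which lets the finitely many break points be absorbed into the cone locus without disturbing the metric or finite volume. I would also flag that the conclusion cannot be upgraded to a Hausdorff-dimension statement: a countable union of dimension-$1$ sets may have dimension strictly bigger than $1$, so only the measure-zero assertion survives the passage from a finite to a countable $C$.
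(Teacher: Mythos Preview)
Your argument is correct and is exactly the approach the paper intends: the only content is the observation that a marked point is a cone point of angle $2\pi$, after which one reduces to Theorem~\ref{bf_extended} via a countable union over finite subsets $C_n\subset C$ and intersection bounds $k$.

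One small correction to your closing aside: Hausdorff dimension \emph{is} countably stable, i.e.\ $\dim_H\bigl(\bigcup_i A_i\bigr)=\sup_i \dim_H(A_i)$, so in fact the set $X$ in the corollary also has Hausdorff dimension~$1$; the measure-zero conclusion is not the only thing that survives.
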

For our purposes, we require the following corollary of the Birman-Series theorem:
\begin{cor}
Consider the usual length $2\pi$ Borel measure on the unit tangent space of $p\in S$--- thought of as the space of directions from $p$. Almost every direction from $p$ projects to a geodesic that's self-intersecting.
\end{cor}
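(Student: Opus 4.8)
The plan is to push the measure-zero statement for the Birman--Series point-set on $S$ down onto the circle of directions at $p$ via geodesic polar coordinates. For a direction $v \in T^1_p S$, let $\ell_v$ denote the complete geodesic through $p$ tangent to $v$ --- this is what a direction at $p$ ``projects to'', and it contains the forward ray one shoots from $p$. Set $V := \{ v \in T^1_p S : \ell_v \text{ is simple} \}$; the assertion is exactly that $V$ has zero length-$2\pi$ measure. First I would invoke Theorem~\ref{bf_extended} with no cone points (equivalently $C = \{p\}$, since $p$ is smooth) and $k = 0$, together with the remark that any subset of $S$ of Hausdorff dimension $1$ has zero Lebesgue measure, to conclude that the set $\mathcal{B} \subseteq S$ of points lying on a complete simple geodesic of $S$ has two-dimensional measure zero. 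The key elementary observation is then: if $v \in V$, the geodesic $\ell_v$ is itself a complete simple geodesic, so $\ell_v \subseteq \mathcal{B}$; in particular, having fixed once and for all an $\epsilon_0 > 0$ smaller than the injectivity radius at $p$, the radial geodesic segment $\sigma_v := \{\exp_p(tv) : 0 < t \le \epsilon_0\}$ lies in $\mathcal{B}$ for every $v \in V$.

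Next I would compute areas. Since $\epsilon_0$ is below the injectivity radius, $\exp_p$ restricts to a diffeomorphism of the open $\epsilon_0$-disk in $T_p S$ onto $B_{\epsilon_0}(p)$, and in geodesic polar coordinates $(t,\theta)$ --- with $\theta$ parametrising $T^1_p S \cong \mathbb{R}/2\pi\mathbb{Z}$ by arc length --- it pulls the hyperbolic area form back to $\sinh t\, dt\, d\theta$. The set $\Sigma := \bigcup_{v\in V}\sigma_v$ is the $\exp_p$-image of $\{tv : v \in V,\ 0 < t \le \epsilon_0\}$, so
\begin{align*}
0 \;=\; \mathrm{Area}(\mathcal{B}) \;\ge\; \mathrm{Area}(\Sigma) \;=\; \int_V\!\!\int_0^{\epsilon_0}\sinh t \;dt\,d\theta \;=\; (\cosh\epsilon_0 - 1)\,|V|,
\end{align*}
where $|V|$ denotes the length-$2\pi$ measure of $V$ (outer measure suffices, so no measurability question arises). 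Since $\cosh\epsilon_0 - 1 > 0$ this forces $|V| = 0$, i.e.\ almost every direction from $p$ projects to a self-intersecting geodesic.

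The step I expect to require the most thought is not any computation but the very first inclusion: that the geodesic attached to a direction in $V$ genuinely lands inside a \emph{measure-zero} Birman--Series set. For the two-sided geodesic $\ell_v$ this is immediate as above. If instead one insists on the forward ray $\rho_v$ alone being almost surely self-intersecting, the naive inclusion ``$\rho_v$ simple $\Rightarrow \rho_v \subseteq \mathcal{B}$'' can fail, because the backward continuation of a simple ray may cross it, so that a simple ray need not lie on any complete simple geodesic. To recover that sharper statement one should instead run the same argument with the set of points lying on geodesic rays issuing from $p$ --- or, more uniformly, geodesics broken at $\{p\}$ --- with finitely many self-intersections, which is still Lebesgue-null by the proof of Theorem~\ref{bf_extended}; the polar-coordinate transfer then goes through unchanged, the only feature it truly uses being that each such geodesic passes through the centre $p$ and hence contributes a genuine wedge's worth of the sparse set near $p$.
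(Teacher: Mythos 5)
The paper states this corollary without proof, so there is no written argument to compare against; your polar-coordinate transfer is a sensible way to make ``immediate'' precise, and the computation $\mathrm{Area}(\Sigma)=(\cosh\epsilon_0-1)\,|V|$ is correct for $V=\{v:\ell_v\ \text{simple}\}$. You have also put your finger on the genuinely subtle point: the way the corollary is used in the paper (existence of a lasso for a.e.\ ray shot from $p$) requires the \emph{forward} ray $\rho_v$ to self-intersect, and a simple forward ray need not lie on any complete simple geodesic of $S$, since its backward continuation may cross it. So your first argument, via the inclusion $\sigma_v\subset\ell_v\subset\mathcal{B}$, only proves the strictly weaker statement about $\ell_v$, and you are right to flag that.

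The proposed repair does not close the gap, however. You assert that the set of points lying on simple geodesic rays issuing from $p$ is Lebesgue-null ``by the proof of Theorem~\ref{bf_extended}'' and that the polar transfer near $p$ then ``goes through unchanged.'' That is precisely where it fails: the Birman--Series covering scheme bounds the measure of points that sit on the \emph{middle} segment of an $n$-segment arc in $J_0$, but a point $x$ on $\sigma_v$ at distance $t<\epsilon_0$ from $p$ has at most $\lceil t/\mathrm{diam}(R)\rceil$ segments between it and $p$, so one cannot send $n\to\infty$ while keeping $x$ central, and the bound $c'e^{-\alpha n}P(n)$ never becomes small on a fixed neighbourhood of $p$. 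Thus the set you invoke is not established to be null by the argument as written, and a genuine new ingredient is needed. A clean repair measures angle at $p$ rather than area near $p$: take $p$ a vertex of $R$ (as the paper already arranges for cone points), note that two directions $v,w$ whose simple rays determine the same class in $[J_0(n)]$ bound a thin region with apex $p$ whose far end has width at most $\mathrm{diam}(R)$, forcing $\angle(v,w)=O(e^{-\alpha n})$, and conclude $|V'|\le P(n)\cdot O(e^{-\alpha n})\to 0$. (Alternatively one can run your area computation on an annulus at radius $\approx n\cdot\mathrm{diam}(R)$ from $p$, but $\exp_p$ is then far from injective and one must insert a lattice-point multiplicity estimate.) Either way, something beyond the inclusion into $\mathcal{B}$ is required, and your sketch currently elides it.
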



Our proof of the Birman-Series theorem is organized as follows:
\begin{enumerate}
\item
Take a geodesic polygonal fundamental domain $R$ with the restricted covering map $\pi:R\rightarrow S$. Show that the number of isotopy classes of $n$-segmented geodesic arcs on $S$ with endpoints on $\pi(\partial R)$ grows polynomially in $n$. 
\item
Show that, with respect to $n$, an exponentially decreasing neighborhood of any representative of such an isotopy class will cover all other representatives of the same class.
\item
By increasing $n$, we prove that the area covered by such geodesic arcs is bounded by a polynomial divided by an
 exponential and must tend to $0$, and use this to obtain the desired results.
\end{enumerate}

\subsection{Notation and proof}
Most of our proof is taken from the original Birman-Series paper, with minor modifications in presentation. Throughout the proof, we assume that intersections are counted with multiplicities. This doesn't affect the actual result because a complete non-closed geodesic with infinitely many self-intersections at only finitely many points cannot exist.
\newline
\newline
For our intents and purposes, a hyperbolic surface $S$ with a finite set of \emph{cone-points} $C$ may be thought of as a topological surface  equipped with a smooth constant curvature $-1$ Riemannian metric on the open set $S-C$, such that the local geometry of a neighborhood around each $p\in C$ may be modeled by taking an angle $\theta_p$ wedge in the hyperbolic plane and radially identifying the two straight edges of this wedge. We call $\theta_p$ the \emph{cone-angle} at $p$. A precise definition may be found in \cite{cone_metric} and \cite{cone_metric2}.\newline
\newline
Let $S$ be a hyperbolic surface with cone-points $C$, and let $R$ be a hyperbolic polygonal fundamental domain for $S$ such that the covering map $\pi:R\rightarrow S$ surjects the vertices of $R$ onto $C$. That such a polygon exists is a simple exercise in applying lemma 7.3 of \cite{mondello_cone}, bearing in mind that $R$ might not embed in the hyperbolic plane.
\newline
\newline
Let $J_k$ denote the set of geodesic arcs with at most $k$ self-intersections on $S$ which start and end on $\pi(\partial R)$, and let $J_k(n)$ be the subset of geodesics $\gamma$ in $J_k$ cut up into $n$ intervals by $\pi(\partial R)$. Moreover, let $[J_k]$ denote the equivalence classes of $J_k$ with respect to isotopies that leave invariant each connected component of $\pi(\partial R)\setminus C$. Define $[J_k(n)]$ similarly.
\newline
\newline
Finally, given $[\gamma]\in [J_k]$ representing some geodesic arc $\gamma\in J_k$, the restriction to $R$ of all lifts of $\gamma$ to the universal cover of $S$ constitutes a set of geodesic segments. Each segment corresponds to an element of $J_k(1)$,\footnote{The $k$ is not used here.} and we call the ordered set of isotopy arc classes obtained in this way a \emph{diagram} on $R$. In particular, we call elements of $[J_0]$ \emph{simple diagrams}. Informed readers may notice that this definition of a diagram differs slightly from those found in \cite{birman_series} and \cite{tan_zhang_cone}.

\begin{figure}[H]
\begin{center}
\includegraphics[scale=0.375]{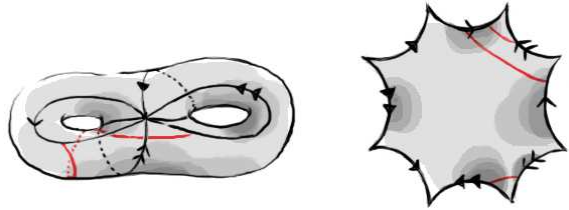}
\caption{An example of a simple diagram (in red).}
\label{diagram}
\end{center}
\end{figure}

The following two lemmas are slight generalizations of lemmas $2.5$ and $3.1$ of \cite{birman_series}, and of lemmas $8.2$ and $8.3$ of \cite{tan_zhang_cone}. In lemma \ref{polynomial}, by tweaking the definition of diagrams to allow segments running between the vertices of $\pi(\partial R)$, we enable our results to extend to broken geodesics. And in lemma \ref{linear}, we introduce Gauss-Bonnet based arguments to cover a concern that arises when a cone-angle is greater than $\pi$.

\begin{lem}
The number of elements in $[J_k(n)]$ is bounded above by a polynomial in $n$.\label{polynomial}
\end{lem}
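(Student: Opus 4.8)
The plan is to bound $|[J_k(n)]|$ by first bounding the number of \emph{simple} diagrams (the $[J_0(n)]$ case) and then accounting for geodesics with up to $k$ self-intersections by a local surgery argument. The key combinatorial input is that a simple diagram on $R$ is, up to isotopy, determined by very little data: for each of the finitely many sides of the polygon $R$, the relevant arcs enter and exit through the sides of $\pi(\partial R)$, and an isotopy class of a collection of pairwise-disjoint (and self-disjoint) arcs in a polygon is determined by how many arcs connect each ordered pair of sides. Since $R$ has a fixed number $E$ of sides, a single segment in $J_0(1)$ falls into one of at most $\binom{E}{2}+E$ isotopy types, so to reconstruct an $n$-segmented simple diagram I need the vector of multiplicities $(m_1,\dots,m_N)$ with $\sum m_i$ comparable to $n$; the number of such vectors is $O(n^{N-1})$, a polynomial in $n$. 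One must also keep track of the cyclic/linear order in which the segments are traversed along $\gamma$, but for a \emph{simple} geodesic this ordering is forced (up to finitely many choices) by the disjointness, which is exactly the content of the corresponding Birman--Series lemma; the broken-geodesic modification only enlarges the finite set of allowed segment types to include arcs terminating at vertices of $R$, which does not change the polynomial nature of the count.

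First I would fix the fundamental polygon $R$ with $\pi$ surjecting vertices onto $C$, enumerate the finitely many isotopy classes of single geodesic segments in $R$ with endpoints on $\partial R$ (now including vertex-terminating ones), and observe that disjointness forces a simple diagram to use only a "consistent" sub-collection of these types. Then I would show that the isotopy class of a simple diagram is recovered from (i) the multiplicity with which each allowed segment type occurs and (ii) a bounded amount of extra combinatorial gluing data describing how the $n$ segments concatenate into a connected arc $\gamma$ — the latter being bounded because at each of the finitely many components of $\pi(\partial R)\setminus C$ the order in which strands cross is determined by planarity. Counting the multiplicity vectors gives the polynomial bound for $|[J_0(n)]|$.

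To pass from $k=0$ to general $k$, I would use the standard trick: a geodesic arc with at most $k$ self-intersections can be cut at its (at most $k$) self-intersection points and the strands slightly pushed apart to yield a simple diagram with a controlled increase in the number of segments — each resolution adds only a bounded number of new segments and new polygon-side crossings — so that $[J_k(n)]$ injects into $[J_0(n')]$ for some $n' \le n + Ck$. Composed with the $k=0$ bound, this yields a polynomial bound in $n$ (of degree depending on $k$ and on the topology of $R$), which is what is claimed.

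The main obstacle I anticipate is \emph{not} the counting of multiplicity vectors — that is genuinely routine — but rather the careful verification that a simple diagram is determined by the finite data I listed, in the presence of cone points with large angles. One has to be sure that allowing segments to run into vertices of $R$ (to accommodate geodesics broken at cone points) still yields only finitely many segment types and still respects the planarity argument that forces the crossing order; the subtlety is that near a cone point of angle $>\pi$ several geodesic arcs may emanate in configurations that have no analogue in the $\theta\le\pi$ or smooth setting, so I would need to check that each such local configuration still contributes only finitely many isotopy types and does not secretly encode unbounded combinatorial information. Handling this correctly — essentially a local normal-form statement at each cone vertex of $R$ — is where the real work lies, and it is exactly the place where this lemma genuinely generalizes the versions in \cite{birman_series} and \cite{tan_zhang_cone}.
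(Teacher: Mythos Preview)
Your treatment of the simple case $k=0$ is essentially the paper's: a simple diagram on $R$ is determined by the multiplicity vector recording how many arcs of each type occur, together with the choice of initial and terminal segment, and since $R$ has finitely many sides (and vertices) this gives a polynomial count. The worry you flag about large cone angles is misplaced --- allowing segments to terminate at vertices of $R$ simply adds finitely many new segment types, and the isotopy class of a disjoint collection of arcs in a polygon is still determined by the multiplicity vector, regardless of the cone angles at the vertices; there is no ``local normal-form'' issue to resolve.

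The genuine gap is in your passage from $k=0$ to general $k$. Resolving the (at most $k$) self-intersections of a connected arc does \emph{not} produce an element of $[J_0(n')]$: cutting and pushing strands apart typically disconnects the arc (a figure-eight becomes two loops), so the result is a multi-arc, not a single simple arc. Moreover the map ``forget the crossings'' is not injective --- many distinct elements of $[J_k(n)]$ can resolve to the same simple multi-arc --- so the claimed injection $[J_k(n)]\hookrightarrow[J_0(n')]$ fails on both counts. The paper avoids surgery entirely: it observes that a non-simple diagram is determined by the same data as in the simple case \emph{plus} the record of which pairs among the $n$ labelled segments actually cross (two geodesic segments in $R$ meet at most once), and since there are at most $k$ crossings this contributes a factor of $\binom{\binom{n}{2}}{k}$, still polynomial in $n$. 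Your surgery idea could be repaired by cutting $\gamma$ into $\le 2k+1$ simple sub-arcs and bounding the number of ways to reassemble them, but the paper's direct count is both simpler and cleaner.
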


\begin{proof}
By construction, a diagram identifies under the covering map to give an element of $[J_k]$. Therefore, the cardinality of $[J_k(n)]$ is bounded above by the number of types of diagrams comprised of $n$ segments. Simple diagrams may be specified by saying which segment we start and which we end on, as well as how many of each type of segment there is. Let $m$ denote the number of sides of $R$, since there are ${2m\choose2}$ types of segments (with respect to the appropriate isotopies):
\begin{align}
\mathrm{Card}[J_0(n)]\leq {n^{2}}{{2m\choose2}+n-1\choose n-1}=:P(n).
\end{align}
For non-simple diagrams, merely specifying the starting, ending segments and the number of each type of segment is insufficient to recover the diagram because there is a degree of freedom for how these segments intersect. In particular, having specified how many of each type of segment there is, if we arbitrarily label these segments from $1$ to $n$, then knowing whether two segments intersect is sufficient to recover the data of the whole diagram. Since two segments may intersect at most once, we see that the degree of freedom introduced by this flexibility in intersection is bounded above by the number of ways of picking $k$ intersections out of all the types of possible intersections: ${n\choose 2}$. Therefore:
\begin{align}
\mathrm{Card}[J_k(n)]\leq P(n){{n\choose 2}\choose k},
\end{align}
and hence the number of elements in $[J_k(n)]$ is bounded by a polynomial in $n$.
\end{proof}

\begin{lem}
The length of a geodesic arc $\gamma\in J_k(n)$ grows at least linearly in $n$ for $n$ sufficiently large. That is: 
$$\ell\gamma\geq\alpha n.$$ \label{linear}
\end{lem}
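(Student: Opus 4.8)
The plan is to show that a geodesic arc $\gamma \in J_k(n)$, being cut into $n$ sub-segments by $\pi(\partial R)$, must have length bounded below linearly in $n$ because each sub-segment, lifted to the universal cover, runs between two sides of a lift of the fundamental domain $R$, and such ``crossing segments'' cannot be arbitrarily short. First I would lift $\gamma$ to the universal cover $\widetilde{S} \subset \mathbb{H}^2$ (or to the developing image, keeping in mind $R$ may not embed), obtaining a geodesic that successively crosses a chain of $n$ lifts $R_1, \dots, R_n$ of $R$, entering and leaving each $R_i$ through its boundary. Let $d$ be the minimum over all pairs of disjoint (non-adjacent) sides of $R$ of the distance between them; if this minimum were positive we could immediately take $\alpha = d$, since each sub-segment would have length at least $d$. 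The difficulty is precisely that $R$ has vertices at cone points, and two sides of $R$ sharing a vertex meet at that vertex, so consecutive sides are at distance zero and a sub-segment that enters and exits near a shared vertex can be very short.

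To handle this, I would argue that short sub-segments cannot occur consecutively too often. The key geometric input is Gauss-Bonnet in the guise flagged in the introduction: geodesic bigons and monogons do not exist, and a geodesic triangle has area $\pi$ minus its angle sum. Consider a maximal run of consecutive very short sub-segments of the lifted geodesic; these all cluster near a single vertex $v$ of the chain of tiles (a lift of a cone point). If the cone angle at that cone point is less than $\pi$, such clustering is impossible beyond a bounded number of steps because the geodesic would have to wind around $v$ through a total angle forced to be small, and the tiles around $v$ subtend only angle $\theta_v < \pi$ on each side — one shows that after passing $v$ the geodesic must move a definite distance away. When the cone angle is $\geq \pi$ (the case the lemma's preamble singles out), more care is needed: here the geodesic genuinely can make a near-$U$-turn near $v$, but a geodesic cannot cross itself in a bigon, so after such a turn it must exit the neighborhood of $v$ and travel a bounded-below distance before it can return; I would make this quantitative by showing that between any two short sub-segments there is a long one whose length is bounded below by a constant depending only on $R$, so that averaging still yields $\ell\gamma \geq \alpha n$ for a smaller constant $\alpha$ and $n$ large.

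Concretely, the steps are: (1) fix $\epsilon > 0$ smaller than, say, half the shortest side length of $R$ and than the distance between any two non-adjacent sides; (2) call a sub-segment ``short'' if its length is $< \epsilon$ and ``long'' otherwise, and observe every short sub-segment has both endpoints within $\epsilon$ of a common vertex of the tile chain; (3) show, using the no-bigon principle and a bound on how much angle the lifted geodesic can subtend at a fixed vertex in either tile-sector, that at most $N_0$ consecutive sub-segments can be short, where $N_0$ depends only on $R$ and the cone angles; (4) conclude that at least $\lfloor n/(N_0+1) \rfloor$ of the sub-segments are long, each contributing length $\geq \epsilon$, so $\ell\gamma \geq \epsilon \lfloor n/(N_0+1)\rfloor \geq \alpha n$ for $n$ sufficiently large with $\alpha = \epsilon/(2(N_0+1))$, say.

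I expect step (3) to be the main obstacle: bounding the number of consecutive short sub-segments near a vertex with cone angle $\geq \pi$. The subtlety is that a lift of such a cone point can be approached and left by the geodesic many times in principle, and one must use global injectivity of the geodesic together with the local picture — that the union of tiles meeting the vertex on one ``side'' of the geodesic's path forms a wedge of total angle at most the cone angle — to rule out indefinite spiraling. This is exactly where the Gauss-Bonnet/no-bigon input replaces the naive ``distinct sides are a positive distance apart'' argument that suffices in the smooth case of \cite{birman_series}.
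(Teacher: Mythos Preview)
Your overall shape---classify sub-segments as short (between adjacent edges, hence near a vertex) or long, and bound the length of any run of consecutive short segments---is exactly the paper's. The gap is in step~(3). You assert that the bound $N_0$ on a run of short segments depends only on $R$ and the cone angles, and you justify this via ``global injectivity of the geodesic'' and no-bigons. But $\gamma$ is not simple: it lies in $J_k$ and may carry up to $k$ self-intersections. That self-intersection budget is precisely what limits spiraling, and hence what determines $N_0$. Near a cone point a geodesic can wind many times, each additional full turn producing an additional self-intersection; with $k$ intersections permitted, runs of order $k$ short segments genuinely occur, so no $N_0$ independent of $k$ can exist. (Your diagnosis of which case is hard is also inverted: it is the \emph{small}-angle situation that allows repeated winding, not the $\geq\pi$ one.)

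The paper makes the dependence on $k$ explicit. With $m$ the number of sides of $R$, it takes $N_0=m(2k+1)$: if $m(2k+1)$ consecutive segments are all short, the arc $\bar\gamma$ spirals at least $2k+1$ times around a single cone point $c$ and meets some edge $e\subset\pi(\partial R)$ at least $2k+1$ times. Cutting along $e$ and $\bar\gamma$ produces a hyperbolic triangle with a vertex at $c$; Gauss--Bonnet forces its internal angle there to be $<\pi$, irrespective of the actual cone angle at $c$. One is now in the regime already handled by Birman--Series (cusp) and Tan--Wong--Zhang (angle $<\pi$), whose argument shows that $2k+1$ windings force at least $k+1$ self-intersections of $\gamma$, contradicting $\gamma\in J_k$. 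So Gauss--Bonnet is not used to cap the spiraling directly, as you propose, but to reduce the large-cone-angle situation to the small-angle one where the self-intersection count finishes the job. Your step~(3) should be rewritten to use $k$ and to target $N_0=N_0(R,k)$; then step~(4) goes through with $\alpha$ depending on $k$, as in the paper's choice $\alpha=\rho/(2m(2k+1))$.
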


\begin{proof}
Let $m$ denote the number of sides of $R$, and let $\bar{\gamma}$ be the geodesic arc given by $m(2k+1)$ consecutive segments of a geodesic $\gamma\in J_k$. It suffices to show that one of these $m(2k+1)$ segments of $\bar{\gamma}$ is of length $\geq\rho$, because we can take $\alpha=\frac{\rho}{2m(2k+1)}$. Assign $\rho$ as the length of the shortest geodesic joining two non-adjacent edges of $R$. Since hyperbolic monogons and bigons do not exist, the only way that one might have a segment of length less than $\rho$ in $R$ is to travel between two adjacent edges. Assume that all $m(2k+1)$ segments of $\bar{\gamma}$ are projections of arcs which join adjacent edges in $R$. Hence $\bar{\gamma}$ spirals at least $2k+1$ times around a cone point $c$, and intersects some edge $e\in\pi(\partial R)$ at least $2k+1$ times. Cutting along $e$ and $\bar{\gamma}$ yields a hyperbolic triangle with internal angle $\theta$ at $c$, and Gauss-Bonnet tells us that $\theta<\pi$. But this is precisely what has previously been covered by Birman-Series ($\theta=0$) and Tan-Wong-Zhang ($0<\theta<\pi$), who showed that this would result in the existence of $k+1$ intersections on $\gamma$ - thereby giving us the desired contradiction.
\end{proof}

\begin{lem}\label{exponential}
Given $\gamma_1,\gamma_2\in J_k(2n+1)$ representing the same isotopy class in $[J_k(2n+1)]$, let $\delta_1$ and $\delta_2$ denote their respective middle (i.e., $n$th) segments. Then, for $n$ large enough, $\delta_1$ lies within a $ce^{-\alpha n}$ neighborhood of $\delta_2$.
\end{lem}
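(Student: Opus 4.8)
The plan is to transplant the Birman--Series fellow-traveller argument into the developed image of the tile-chain cut out by $\gamma_1$ and $\gamma_2$, and then to quote the elementary fact that two geodesics of $\mathbb{H}^2$ which stay within a bounded distance of one another over a long interval are exponentially close in the middle of that interval. Concretely: lift $\gamma_1$ to the universal cover of $S$ and develop the resulting chain of lifts of $R$ into $\mathbb{H}^2$, obtaining polygons $R_0,R_1,\dots,R_{2n}$ with consecutive ones sharing a genuine geodesic edge $\tilde e_i$ of length at most $D:=\mathrm{diam}\,R$, and with the lift $\tilde\gamma_1$ crossing $\tilde e_0,\dots,\tilde e_{2n}$ in turn. (Near a cone point of angle $>2\pi$ the developed polygons overlap, but this does no harm to the argument.) Because $\gamma_2$ represents the same class in $[J_k(2n+1)]$ it has the same diagram, and since the admissible isotopies never drag a segment across a point of $C$, a suitable lift $\tilde\gamma_2$ of $\gamma_2$ develops through the \emph{same} chain and crosses the \emph{same} edges $\tilde e_0,\dots,\tilde e_{2n}$; in particular $\tilde\gamma_1$ and $\tilde\gamma_2$ each meet $\tilde e_i$, so at the $i$th crossing they are within $D$ of each other, for all $0\le i\le 2n$.

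Next I would localise to the middle and conclude. Let $\delta_1$ be the middle segment of $\tilde\gamma_1$ and $x\in\delta_1$. Following $\tilde\gamma_1$ from $x$ back to $\tilde e_0$ traverses about $n$ segments, and likewise forward to $\tilde e_{2n}$; by Lemma~\ref{linear}, once $n$ is large each of these two sub-arcs has length at least $\alpha n$. Hence $x$ lies on an arc that $D$-fellow-travels $\tilde\gamma_2$ for length $\ge\alpha n$ on either side, and the standard hyperbolic estimate at the heart of \cite{birman_series} gives $d_{\mathbb{H}^2}(x,\tilde\gamma_2)\le c\,e^{-\alpha n}$ (the constant $\alpha$ here being absorbed from the curvature $-1$ together with the one in Lemma~\ref{linear}). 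For $n$ large the realising point of $\tilde\gamma_2$ is pinned to its middle segment $\delta_2$ --- or at worst to a segment adjacent to $\delta_2$, which is then itself within $c'e^{-\alpha n}$ of $\delta_2$ by the same token. Pushing back down through the $1$-Lipschitz developing/covering map, every point of $\delta_1\subset S$ lies within $ce^{-\alpha n}$ of $\delta_2$, as claimed.

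The main obstacle --- and the reason this needs more than citing \cite{birman_series} and \cite{tan_zhang_cone} --- is that $\tilde\gamma_1$ and $\tilde\gamma_2$ are only broken geodesics: at a cone point of angle $\ge\pi$ two abutting segments may meet at a corner, so the curves one is fellow-travelling are not ambient geodesics. The individual segments, in particular $\delta_1$ and $\delta_2$, remain honest geodesic segments of $\mathbb{H}^2$, and Lemma~7.3 of \cite{mondello_cone} guarantees that at every such corner both sides subtend at least $\pi$; the real work is in checking that under this constraint the fellow-traveller estimate of the previous paragraph goes through with only the constant $c$ degrading, and that the (possibly non-embedded) developed tile-chain still forces $\tilde\gamma_1$ and $\tilde\gamma_2$ through a common edge-sequence. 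Both are routine but fiddly, and everything else in the proof is a verbatim adaptation of the classical case.
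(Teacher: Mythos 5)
Your proposal is correct and arrives at the same estimate, but it travels a genuinely different route from the paper's. The paper does not develop the tile chain at all: it observes that because $\gamma_1$ and $\gamma_2$ are isotopic via an isotopy that fixes each component of $\pi(\partial R)\setminus C$, the isotopy sweeps out a strip containing no cone point; that cone-free strip can be developed directly into $\mathbb{H}$, cut at the shared starting edge, and what results is a long, thin geodesic polygon with two long sides ($\tilde\gamma_1,\tilde\gamma_2$) and two short sides of length bounded by the longest edge of $R$. One can then quote Birman--Series Lemma~3.2 on the exponential thinning of such a waist outright. Your version develops the whole chain of fundamental polygons $R_0,\dots,R_{2n}$, notes that $\gamma_1,\gamma_2$ share a diagram and hence cross the same lifted edges, bounds the separation at each crossing by $\mathrm{diam}\,R$, and then invokes the fellow-traveller contraction. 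Both reduce to the same core hyperbolic estimate, but the paper's strip construction is leaner: it never has to confront the overlapping developed tiles that arise near cone angles $>2\pi$, nor to argue that a common edge sequence is traversed --- those facts are replaced by the single, cheap observation that the isotopy region is cone-free.

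On the broken-geodesic worry you flag: you are right to be uneasy, and in fact the paper is less careful than you are here. The paper's proof of this lemma opens with ``Since $\gamma_1$ and $\gamma_2$ are homotopic and \emph{unbroken} geodesics,'' yet the theorem proof later applies the lemma to subarcs of the complete geodesics in $S_k$, which are explicitly allowed to be broken at $C$. So the paper implicitly assumes the broken case either does not arise or works the same way, without saying why. Your instinct to invoke the $\ge\pi$-on-both-sides condition from Lemma~7.3 of \cite{mondello_cone} is the correct repair: a Gauss--Bonnet count shows two such broken geodesics still bound no bigon (the reflex corners only make the area negative faster), so ``at most one intersection'' survives, and the reflex corners make the developed strip convex toward $\tilde\gamma_2$, so the thinning estimate degrades only in the constant. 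That said, like the paper, you leave this check as ``routine but fiddly'' rather than carrying it out; your contribution is to name the gap rather than to close it. In short: a correct, somewhat heavier proof than the paper's, with a more honest acknowledgment of the one place where the argument genuinely needs the cone-angle hypothesis.
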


\begin{proof}
Since $\gamma_1$ and $\gamma_2$ are homotopic and unbroken geodesics, they have at most one intersection and the strip between them will not contain any cone-points. Hence, we can develop this thin region locally in $\mathbb{H}$. Cutting this strip at the shared starting point/edge of the $\gamma_i$, we obtain a (potentially self-intersecting) geodesic polygon with two long sides given by the $\gamma_i$ and the remaining two short sides with length bounded by the longest edge in $R$. Since this setup is now independent of cone points, the analogous result --- lemma $3.2$ of the original Birman-Series paper gives us the desired computation that an $\alpha n$-long hyperbolic polygon will have a $ce^{-\alpha n}$-thin waist region.
\end{proof}

Finally, we prove theorem \ref{bf_extended}. Our arguments are taken from section $4$ of \cite{birman_series}. 

\begin{proof}
First select one geodesic representative for each class in $[J_k(2n+1)]$ and denote this collection of $n$-th segments of these geodesics by $F_n$. Let $S_k$ denote the collection of points which lie on a complete hyperbolic geodesic on $S$ with at most $k$ intersections. Any $x\in S_k$ lies on some arc $\gamma\in J_k$ and hence on the middle segment of the corresponding $(2n-1)$-segmented subarc of $\gamma$ denoted by $\bar{\gamma}\in J_k(2n-1)$. But, by lemma \ref{exponential}, we know that $x$ must lie in a $ce^{-\alpha n}$ neighborhood of any representative of $[\bar{\gamma}]$ in $[J_k(2n-1)]$. Therefore, the $ce^{-\alpha n}$ neighborhood of $F_n$ covers $S_k$. Since the cardinality of $F_n$ is bounded by a polynomial $P_k(n)$ and the length of each segment in $F_n$ is at most $\mathrm{diam}(R)$, the closure of the $ce^{-\alpha n}$-neighborhood of $F_n$ has measure bounded by $c'e^{-\alpha n}P(n)$, where $c'$ is determined by $c$ and $\mathrm{diam}(R)$. Thus, $S_k$ lies in the intersection of a collection of closed sets with arbitrarily small measure, and must be nowhere dense. 
\newline
\newline
To obtain that $S_k$ is Hausdorff dimension $1$, we first note that its dimension must be at least $1$ because it contains geodesics. On the other hand, we can cover the $ce^{-\alpha n}$ neighborhood of $F_n$ with $\lceil\mathrm{diam}(R)/(2ce^{-\alpha n})\rceil$ balls of radius $2ce^{-\alpha n}$ to show that the Hausdorff content $C^d_H(S_k)=0$ for $d=1$. Therefore, the Hausdorff dimension of $S_k$ is bounded above by $1$.
\end{proof}

\section{Closed Surface Identity}
\pagestyle{plain}
In this section, we first introduce the notion of lasso-induced hyperbolic half-pants. Several small lemmas then lead to the proof of our main theorem.


\subsection{The geometry of half-pants}

A thrice punctured sphere endowed with a hyperbolic metric so that each of its boundary components is either a cone-point, cusp or a closed geodesic is called a hyperbolic \emph{pair of pants}. For each boundary component $\beta$, there is a unique shortest geodesic arc starting and ending at $\beta$ which cannot be homotoped into the boundary. We call this geodesic the \emph{zipper} with respect to $\beta$.
\newline
\newline
We call any connected components obtained by cutting a pair of pants along one of its three zippers a pair of hyperbolic \emph{half-pants}, and we call its non-zipper closed boundary component its \emph{cuff}. Any pair of half-pants may be obtained by gluing together two orientation-reversing-isometric hyperbolic polygons as in figure \ref{half-pants-halved}. Gluing this isometry with its inverse induces an orientation reversing involution on a pair of half-pants.

\begin{figure}[H]
\begin{center}
\includegraphics[scale=0.375]{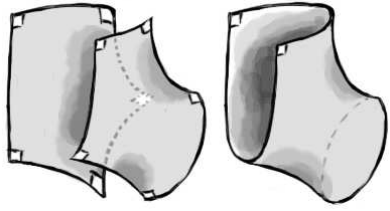}
\caption{A dissected pair of half-pants}
\label{half-pants-halved}
\end{center}
\end{figure}

Henceforth, all half-pants $P$ relevant to our purposes result from cutting a pair of pants, with one cone-pointed boundary $C$ and two geodesic boundaries, along the zipper $\zeta$ with respect to $C$. Given any pair of half-pants $P$, there are precisely two simple complete geodesic rays $r_1, r_2$ starting at $C$ and spiraling arbitrarily close to the cuff of our half-pants. Due to the reflection isometry of $P$, the angle at $C$ between $\zeta$ and $r_1$ is the same as that between $\zeta$ and $r_2$; we call this the \emph{spiral-angle} of $P$ at $C$. 

\begin{figure}[h]
\begin{center}
\includegraphics[scale=0.375]{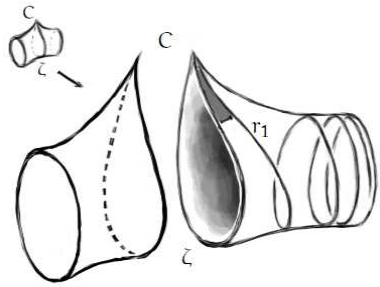}
\caption{A shaded spiral-angle region}
\end{center}
\end{figure}
\begin{lem}\label{spiral-angle}
Given a pair of hyperbolic half-pants $P$ with (ex-)cone-point boundary $C$, zipper $\zeta$ and cuff $\gamma$, its spiral-angle is:
\begin{align}
\mathrm{arcsin}\left(\frac{\cosh(\frac{\ell\gamma}{2})}{\cosh(\frac{\ell\zeta}{2})}\right)-\mathrm{arcsin}\left(\frac{\sinh(\frac{\ell\gamma}{2})}{\sinh(\frac{\ell\zeta}{2})}\right).\label{bangle}
\end{align}
\end{lem}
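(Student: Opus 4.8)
The plan is to develop the half-pants $P$ into $\mathbb{H}^2$. As a topological annulus whose core is freely homotopic to the cuff $\gamma$, $P$ has cyclic holonomy generated by a hyperbolic isometry $g$ of translation length $\ell\gamma$, whose axis $A$ covers $\gamma$; thus $P$ embeds in the hyperbolic cylinder $\langle g\rangle\backslash\mathbb{H}^2$ as the sub-annulus bounded by $\gamma$ and the zipper $\zeta$. Fix a lift $\tilde C$ of the cone point. Because $\zeta$ is the shortest geodesic loop at $C$ freely homotopic to $\gamma$, it lifts to the geodesic segment $\tilde\zeta = [\tilde C, g\tilde C]$, while the two simple rays $r_1, r_2$ from $C$ spiralling to the cuff lift to the geodesic rays from $\tilde C$ to the two fixed points of $g$ on $\partial\mathbb{H}^2$. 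The spiral-angle is, by definition, the angle at $\tilde C$ between $\tilde\zeta$ and one of these rays.

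I would then pass to upper half-plane coordinates with $A$ the imaginary axis, $g(z) = e^{\ell\gamma}z$, and the foot of the perpendicular from $\tilde C$ to $A$ equal to $e^{-\ell\gamma/2}i$ --- so that, writing $\rho := \operatorname{dist}(\tilde C, A)$, one has $\tilde C = e^{-\ell\gamma/2}(\tanh\rho + i\operatorname{sech}\rho)$ and $g\tilde C = e^{\ell\gamma/2}(\tanh\rho + i\operatorname{sech}\rho)$, placed symmetrically about $i$. The hyperbolic distance formula (equivalently, the Saccheri-quadrilateral relation for the quadrilateral bounded by $\tilde\zeta$, the base segment of $A$, and the two perpendicular legs of length $\rho$) then yields the key identity $\sinh(\ell\zeta/2) = \sinh(\ell\gamma/2)\cosh\rho$, which lets me eliminate $\rho$. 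Next I would compute, at $\tilde C$, the two Euclidean angles $\beta_2$ and $\beta_1$ that the ray $r_1$ and the segment $\tilde\zeta$ make with the perpendicular direction $\nu$ to $A$. Since $r_1$ lifts to the vertical ray at $\tilde C$, one finds $\cos\beta_2 = \tanh\rho$, hence $\sin\beta_2 = \operatorname{sech}\rho = \sinh(\ell\gamma/2)/\sinh(\ell\zeta/2)$; and since $\tilde\zeta$ lies on the circle through $\tilde C$ and $g\tilde C$ centred at $\cosh(\ell\gamma/2)/\tanh\rho \in \mathbb{R}$, a short computation with the tangent directions (and the key identity) gives $\sin\beta_1 = \cosh(\ell\gamma/2)/\cosh(\ell\zeta/2)$.

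To finish, I would invoke the orientation-reversing involution of $P$: it lifts to the reflection in the geodesic through $\tilde C$ perpendicular to $A$, which fixes $\nu$, interchanges $r_1$ and $r_2$, and interchanges the two ends of $\zeta$ at $C$ --- so the two rays, and the two ends of $\zeta$, are each mirror-symmetric about $\nu$. Because $\ell\gamma < \ell\zeta$ forces $\beta_2 < \beta_1$, the ray $r_1$ lies in the sector between $\nu$ and the nearer end of $\zeta$, and therefore the spiral-angle equals $\beta_1 - \beta_2$, which is exactly \eqref{bangle}. I expect the only real obstacle to be bookkeeping --- fixing orientation conventions in the universal cover so that the two angles subtract rather than add, and tracking signs in the Euclidean angle computations; the hyperbolic trigonometry itself is routine. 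It is worth noting that the cone-angle at $C$ never enters, since $\tilde\zeta$, the two rays and $\nu$ all emanate into a genuinely embedded hyperbolic sector at $\tilde C$ --- consistent with \eqref{bangle} depending only on $\ell\gamma$ and $\ell\zeta$.
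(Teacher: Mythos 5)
Your proposal is correct and follows essentially the same route as the paper: both express the spiral-angle as the difference of the angle at $C$ between $\zeta$ and the perpendicular to the cuff $\gamma$, and the angle at $C$ between the spiraling ray $r_i$ and that same perpendicular, then evaluate each via the relations $\sin\theta_1\cosh(\ell\zeta/2)=\cosh(\ell\gamma/2)$ and $\sin\theta_2\cosh\rho=1$, $\cosh\rho\sinh(\ell\gamma/2)=\sinh(\ell\zeta/2)$. The only difference is implementation --- the paper cuts $P$ into Lambert quadrilaterals and a right triangle and cites Buser's trigonometric formulas, whereas you derive the same three identities by explicit upper-half-plane coordinate computation.
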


\begin{proof}
By cutting $P$ along the shortest geodesic from $C$ to $\gamma$ and then the shortest geodesic between $\zeta$ and $\gamma$, we decompose $P$ into two isometric quadrilaterals with three right angles. For either quadrilateral, let its acute angle be of magnitude $\theta_1$, then:
\begin{align}
\sin(\theta_1)\cosh(\frac{\ell\zeta}{2})=\cosh(\frac{\ell\gamma}{2}).\label{(2)}
\end{align}
See, for example, Theorem $2.3.1$ of \cite{buser}. By considering one lift of this quadrilateral to the universal cover $\tilde{P}\subset\mathbb{H}$ of the half-pants $P$ with an appropriate lift of $r_i$, as shown in figure \ref{universal-cover}, we see that the gap angle is given by $\theta_1$-$\theta_2$. For the triangle bounded by the depicted lifts of $r_i$, $\gamma$ and the shortest geodesic from $C$ to $\gamma$,
\begin{align}
\sin(\theta_2)\cosh(\mathrm{d}_{\mathbb{H}(C,\gamma)})&=1.\label{(3)}
\end{align}
And for the quadrilateral comprising half of the half-pants $P$:
\begin{align}
\cosh(\mathrm{d}_{\mathbb{H}(C,\gamma)})\sinh(\frac{\ell\gamma}{2})&=\sinh(\frac{\ell\zeta}{2}).\label{(4)}
\end{align}
See, for example, Theorem $2.3.1$ and Theorem $2.2.2$ of \cite{buser}. Putting $\eqref{(2)}$, $\eqref{(3)}$ and $\eqref{(4)}$ together, we obtain the magnitude of the cone-angle.
\end{proof}

\begin{figure}[h]
\begin{center}
\includegraphics[scale=0.42]{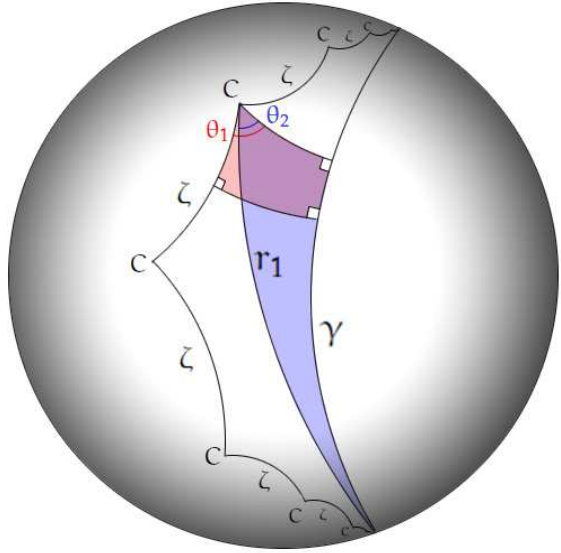}
\caption{The universal cover of $P$}
\label{universal-cover}
\end{center}
\end{figure}

\begin{lem}\label{lies-within}
Given a pair of half-pants $P$ embedded in some surface $S$ labeled as above, the segment of any geodesic ray projecting from $C$ up to its first self-intersection lies completely in $P$ iff it projects within one of the two spiral-angles of $\zeta$.
\end{lem}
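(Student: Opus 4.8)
The plan is to pass to the universal cover of the half-pants, where the picture becomes transparent, and to exploit the fact that this cover is convex. Since everything about the statement depends only on whether $\rho$ self-intersects before leaving $P$ (and leaving $P$ means crossing $\gamma$, or crossing $\zeta$, or returning to $C$), the ambient surface $S$ plays no role. I would set up as follows: lift $P$ to a copy $\tilde P\subset\mathbb{H}$ of its universal cover. As $\pi_1 P\cong\mathbb{Z}$, the preimage of the cuff $\gamma$ is a single geodesic $\tilde\gamma$ with endpoints $\gamma^{\pm}$ along which a generator $g$ translates, and the preimage of the zipper boundary is a single $g$-invariant ``staircase'' $\Lambda=\bigcup_{j\in\mathbb{Z}}\overline{C_jC_{j+1}}$ of lifts of $\zeta$ meeting at the lifts $C_j=g^jC_0$ of $C$; the region $\tilde P$ lies between $\tilde\gamma$ and $\Lambda$, and is convex because its interior angle at each $C_j$ is the angle of $P$ at $C$, which is the quantity $2\theta_1<\pi$ from the proof of Lemma \ref{spiral-angle}. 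Lifting the ray to $\tilde\rho$ based at $C_0$, let $\tilde r_1,\tilde r_2$ be the lifts at $C_0$ of the two spiral rays; these are asymptotic to $\gamma^+$, $\gamma^-$ respectively, and since $r_1,r_2$ are simple and stay in $P$, each $\tilde r_i$ meets $\Lambda$ only at $C_0$, is disjoint from $\tilde\gamma$, and has pairwise disjoint $g$-translates. At $C_0$ the directions into $\tilde P$ split into the middle sector $(\tilde r_1,\tilde r_2)$ and the two spiral-angle sectors $[\zeta^+,\tilde r_1]$, $[\tilde r_2,\zeta^-]$ adjacent to the two staircase edges $\zeta^{\pm}$, each of width \eqref{bangle}.

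For the ``only if'' direction I would take $v$ in the open middle sector and show $\rho$ reaches $\gamma$ before self-intersecting. Such a direction points across the strip, so by convexity $\tilde\rho$ runs through the interior of $\tilde P$ and exits transversally through $\tilde\gamma$ at a single point $x_0$, never touching $\Lambda$; hence $\rho$ stays in $P$ until it meets $\gamma$. Each translate $g^i\tilde\rho$ is likewise a chord of $\tilde P$ from $C_i\in\Lambda$ to $x_i=g^ix_0\in\tilde\gamma$, and since $g$ moves points along $\Lambda$ and along $\tilde\gamma$ consistently, the points $C_i,x_i$ occur along $\partial\tilde P$ in the unlinked cyclic order $\ldots,C_{-1},C_0,C_1,\ldots,x_1,x_0,x_{-1},\ldots$; so the chords $\tilde\rho|_{[C_0,x_0]}$ and $g^i\tilde\rho|_{[C_i,x_i]}$ are disjoint for $i\neq0$ and $\rho$ is simple up to the moment it meets $\gamma$. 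Hence the segment up to the first self-intersection properly contains a point off $P$.

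For the ``if'' direction I would take $v$ in a spiral-angle sector, say the interior of $[\zeta^+,\tilde r_1]$ (the other being symmetric under the reflection of $P$). Then the endpoint of $\tilde\rho$ lies on the $C_0$-side of $\tilde\gamma$, so $\rho$ never meets $\gamma$, and by convexity $\tilde\rho$ either stays in $\overline{\tilde P}$ forever — in which case $\rho$ stays in $\overline P$ — or exits $\tilde P$ exactly once, through $\Lambda$, at a point $z$. If $z$ is a vertex $C_j$, or if $\tilde\rho$ passes through some $C_j$ before $z$, then $\rho$ returns to $C$, a self-intersection occurring while still in $\overline P$. Otherwise $z$ lies in the interior of an edge $\overline{C_jC_{j+1}}$; using that $\tilde\rho$ cannot touch $\partial\tilde P$ before $z$ and comparing $\tilde\rho$ with the disjoint rays $\tilde r_1,g\tilde r_1,g^2\tilde r_1,\ldots$ (all asymptotic to $\gamma^+$), an induction shows $\tilde\rho$ successively crosses $g\tilde r_1,\ldots,g^j\tilde r_1$, enters the ideal triangle $T_j$ with vertices $C_j,C_{j+1},\gamma^+$, and runs to $z$ on the side $\overline{C_jC_{j+1}}$. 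The subarc of $\tilde\rho$ inside $T_j$ cuts off a compact triangle with apex $C_j$ whose two sides at $C_j$ point in the directions $g^j\tilde r_1$ and $g^j\zeta^+$; the translate $g^j\tilde\rho$, whose initial direction $g^jv$ lies strictly between these, enters this compact triangle and is therefore forced to cross $\tilde\rho$ before $z$. That crossing is a self-intersection of $\rho$ strictly inside $P$, so the segment up to the first self-intersection lies in $\overline P$. The boundary directions $v\in\{\zeta^{\pm},\tilde r_1,\tilde r_2\}$, where $\rho$ is $r_1$, $r_2$, or runs along a staircase edge back to $C$, lie in the closed spiral-angle sectors and plainly have their initial segment in $\overline P$.

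The hard part is this last step: ruling out that a spiral-sector ray slips out through the zipper while still simple. It rests on two ingredients working together — convexity of $\tilde P$, which prevents $\tilde\rho$ from meeting the staircase prematurely and pins down which side of $\Lambda$ it escapes through, and the simplicity of $r_1$, i.e. the disjointness of the translates $g^m\tilde r_1$, which forces the crossing of $\tilde\rho$ with $g^j\tilde\rho$ inside the compact triangle at $C_j$. Everything else is bookkeeping with the staircase and the nested triangles.
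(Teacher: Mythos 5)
Your proof is correct and, like the paper's, works entirely in the universal cover $\tilde P$. For the \emph{if} direction the two arguments share the same engine: a $g$-translate of the lifted arc is trapped between the staircase $\Lambda$ and the arc itself, forcing a crossing before the arc can leave through $\zeta$. The paper phrases this tersely via the polygon bounded by $\bar\alpha$ and the $\zeta$-lifts together with minimality of the exit time, whereas you track the ideal triangles $T_j$ and pin the crossing to a compact triangle at $C_j$; both work, yours is more verbose but also more verifiable. For the \emph{only if} direction your route is genuinely different: the paper observes that all lifts of $\beta$ meet $\tilde\gamma$ at the same angle $\phi$, so a crossing between two lifts before $\tilde\gamma$ would create a triangle violating Gauss--Bonnet; you instead note that the lifts are chords of $\tilde P$ whose endpoints $C_i\in\Lambda$, $x_i\in\tilde\gamma$ sit in unlinked cyclic order on $\partial\tilde P$, so the chords are pairwise disjoint. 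The cyclic-order argument is a bit more elementary --- no angle accounting at all --- and lets $g$-equivariance carry the whole load. One thing worth pinning down explicitly if you keep your version: the convexity of $\tilde P$ that you rely on in both directions follows from the interior angle $2\theta_1<\pi$ at each $C_j$ (visible in the Lambert-quadrilateral decomposition in Lemma~\ref{spiral-angle}); this is correct but isn't stated in the paper, and it is the fact that rules out the ray slipping past $\Lambda$ and back in, so it deserves a sentence of its own. You should also note that the exit edge $\overline{C_jC_{j+1}}$ necessarily has $j\ge 1$ (a crossing with $\overline{C_0C_1}$ would give a geodesic bigon at $C_0$), so that $g^j\tilde\rho$ is genuinely a different lift and the crossing argument is not vacuous.
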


\begin{proof}
We see from the universal cover of $P$ that any geodesic $\alpha$ launched within a spiral-angle of $\zeta$ must intersect $\zeta$ since it cannot intersect $r_i$ without a bigon forming. Any lift of the segment $\bar{\alpha}$ of $\alpha$ up to its first intersection with $\zeta$ will, along with the lifts of $\zeta$, bound a polygonal region in $\tilde{P}$. 
 Then consider a different lift of $\bar{\alpha}$ starting from within this polygon, by the minimality of the intersection time of $\bar{\alpha}$ with $\zeta$, this different lift cannot intersect the sides of our polygon corresponding to $\zeta$. Thus, we see that $\alpha$ intersects itself before leaving $P$ via $\zeta$, giving us the \emph{if} part of the claim. 
 \newline
 \newline
 On the other hand, any geodesic $\beta$ launched outside of either spiral-angle regions will first intersect $\gamma$ at some angle $\phi>0$. If two lifts of $\beta$, denoted by ${\beta}_1$ and ${\beta}_2$, intersect before leaving $\tilde{P}$, then $\beta_1$, $\beta_2$ and $\gamma$ bound a hyperbolic triangle. Gauss-Bonnet then tells us that the angles, measured clockwise, between the $\beta_i$ and $\gamma$ differ by at least $\phi$. We now have two distinct values for the angle at which $\beta$ first intersects $\gamma$; this contradiction gives us the desired converse.
\end{proof}

\subsection{Lassos and lasso-induced hyperbolic half-pants}

Let $\alpha$ be a geodesic ray shooting out from $p$. By the Birman-Series theorem, $\alpha$ is almost always self-intersecting, and we call the geodesic segment of $\alpha$, up to its first self-intersection the \emph{lasso} of $\alpha$. Further, we call the segment from $p$ to the intersection point of a lasso its \emph{spoke}, and the simple closed geodesic broken at the intersection point of a lasso its \emph{loop}. A lasso emanating from $p$ naturally determines an immersed pair of half-pants on $S$ as follows:

\begin{lem}\label{lasso}
The lasso of any non-simple geodesic ray emanating from $p$ induces and is contained in an isometrically immersed pair of half-pants.
\end{lem}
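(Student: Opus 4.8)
The plan is to read the cuff of the half-pants off the loop of the lasso, reconstruct the whole half-pants by developing in an intermediate cover, and then verify containment.

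First I would dissect the lasso into its \emph{spoke} $\sigma$---the initial segment from $p$ up to the first self-intersection point $x$---and its \emph{loop} $L$, the remaining segment, which is a geodesic loop based at $x$ with a single corner at $x$, where the two strands of the ray cross transversally (transversally, since $x$ is the \emph{first} self-intersection). A geodesic loop on a hyperbolic surface is never null-homotopic rel basepoint: lift to $\mathbb{H}$ and note that a nondegenerate geodesic segment has distinct endpoints. As $L$ is moreover embedded away from $x$, it is freely homotopic to a unique simple closed geodesic $\gamma$, which we take as the cuff. It is worth recording that the bi-infinite lift of $L$ to $\mathbb{H}$ has one corner per period, all turning the same way, so it is a convex broken geodesic lying strictly to one side of the axis $\tilde{\gamma}$ of the hyperbolic isometry $g$ it tracks.

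Next I would build the candidate immersed pair of half-pants by developing in the cylindrical cover. Let $\widehat{S}:=\mathbb{H}/\langle g\rangle\to S$ be the $\mathbb{Z}$-cover corresponding to $\langle g\rangle$; it is a hyperbolic cylinder with core geodesic $\widehat{\gamma}$ covering $\gamma$ isometrically and two infinite-area funnels. The lasso lifts homeomorphically to a lasso $\widehat{\lambda}\subset\widehat{S}$---its self-intersection survives because the two strands of $\alpha$ through $x$ differ by the deck transformation $g$---whose loop is a core-generating curve staying strictly on one side of $\widehat{\gamma}$ (by the convexity above) and whose spoke runs from the relevant lift $\widehat{p}$ of $p$. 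Take $\widehat{P}\subset\widehat{S}$ to be a pair of half-pants with cuff $\gamma$ and (ex-)cone point $\widehat{p}$, sitting on that side of $\widehat{\gamma}$; the natural candidate is the one cut off by the zipper at $\widehat{p}$, i.e.\ by the shortest essential geodesic arc from $\widehat{p}$ to itself. Then the composite $\iota\colon\widehat{P}\hookrightarrow\widehat{S}\xrightarrow{\ \pi\ }S$ is an isometrically immersed pair of half-pants whose cone point maps to $p$: it is a local isometry off the cone point, being the restriction to a subsurface of the Riemannian covering $\pi$, and near the cone point it maps a hyperbolic wedge isometrically onto a wedge-neighborhood of $p$. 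Since $\iota$ carries $\widehat{\lambda}$ back to the original lasso, the lemma reduces to finding such a $\widehat{P}$ with $\widehat{\lambda}\subset\widehat{P}$.

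This containment is the crux, and it is exactly where the minimality defining the lasso enters. By Lemma~\ref{lies-within}, applied with $\widehat{P}\hookrightarrow\widehat{S}$, it amounts to the initial direction of $\alpha$ at $p$ falling inside one of the two spiral-angles of $\widehat{P}$; and since the loop of $\widehat{\lambda}$ already lies in $\widehat{P}$, the only possible obstruction is the spoke leaving $\widehat{P}$ across $\widehat{\gamma}$ or the zipper. If it did, then---developing in $\mathbb{H}$ and running a bigon / Gauss-Bonnet argument in the style of Lemmas~\ref{linear} and~\ref{lies-within}---one would produce a self-intersection of $\alpha$ strictly before its loop closes, contradicting that $x$ is the first self-intersection; if the natural candidate zipper is too short, the same argument locates the slightly larger half-pants that works. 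I expect this step, together with the routine check that $\widehat{P}$ is a bona fide pair of half-pants in the sense of the previous subsection, to be the only real difficulty; the remaining points are formal.
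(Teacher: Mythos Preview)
Your setup is sound and essentially parallel to the paper's: you both identify the cuff $\gamma$ as the closed geodesic freely homotopic to the loop, and the zipper as the geodesic arc based at $p$ in the homotopy class of (lasso)$\cdot$(spoke)$^{-1}$. Working in the $\mathbb{Z}$-cover $\widehat{S}=\mathbb{H}/\langle g\rangle$ is exactly the right venue; the paper does the same computation one level up, in $\mathbb{H}$.

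The gap is in the containment step. Your appeal to Lemma~\ref{lies-within} only reformulates the problem; the actual work is your claim that ``the loop of $\widehat{\lambda}$ already lies in $\widehat{P}$'', which you assert without argument. Knowing that the periodic lift of $L$ is convex and on the correct side of the axis does \emph{not} by itself place it on the axis-side of the zipper lift $\tilde{\zeta}$: both are convex broken quasi-geodesics with the same endpoints at infinity, and you have not compared them. Your proposed contradiction---crossing the zipper or the cuff forces an earlier self-intersection---is not spelled out, and the sentence about ``the slightly larger half-pants that works'' is confused: once $\widehat{p}$ and $\widehat{\gamma}$ are fixed, the half-pants (hence the zipper) is uniquely determined, so there is no larger one to pass to.

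The paper closes this gap with a one-line convexity argument you should adopt. Develop the loop periodically in $\mathbb{H}$ and adjoin all $g$-translates of the spoke; equivalently, take $\bigcup_{k}g^{k}A$ where $A$ is the single geodesic segment from $\tilde{p}$ to $g\tilde{x}_{1}$ lifting the lasso. The only preimage of $p$ on $A$ is $\tilde{p}$ (else the ray would return to $p$ before closing its loop), so the points $g^{k}\tilde{p}$ are the extremal points of this $g$-invariant configuration, while each $g^{k}\tilde{x}_{1}$ lies in the interior of $g^{k}A$ and is hence not extremal. The convex hull is therefore bounded on the far side by the broken geodesic through the $g^{k}\tilde{p}$, which is precisely $\tilde{\zeta}$; on the near side it limits on the axis. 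That convex hull is a universal cover of $\widehat{P}$, and it tautologically contains $A$. This replaces your vague bigon/Gauss--Bonnet step with a clean convexity statement and makes the ``routine check'' that $\widehat{P}$ is a genuine pair of half-pants immediate.
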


\begin{proof}
 The closed path obtained by traversing the length of the lasso of $\alpha$ and back along its spoke is a representative of a simple element of $\pi_1(S,p)$. Let $\gamma_p$ be its unique geodesic representative on the surface $S$ with the condition that $\gamma_p$ must begin and end at $p$ - that such a curve exists may be obtained from curve-shortening arguments, or by considering the universal cover of $S$. Note that it \emph{is} possible for $\gamma_p$ to be self-intersecting, although it is locally geodesic everywhere except at its end-points.\newline
 \newline
 Let $\gamma$ denote the unique simple geodesic representative of $[\gamma_p]\in\pi_1(S)$ on the closed surface $S$. These two geodesics $\gamma_p$ and $\gamma$ are homotopic in $S$ and bound the immersed image $\iota(P)$ of a pair of half-pants $P$. 
\newline
\newline
We need now to show that our lasso is contained in $\iota(P)$. This is a natural consequence of an adapted version of a curve-shortening procedure attributed to Semmler (Appendix in \cite{buser}), but can also be seen as follows: consider the universal cover of the loop of our lasso developed in $\mathbb{H}$, and add in all lifts of the spoke of our lasso adjoining this infinite quasi-geodesic. The convex (hyperbolic) hull of this shape in $\mathbb{H}$ is a universal cover for $P$, therefore our lasso must lie within $P$ and hence within $\iota(P)$.

\end{proof}

We say that an immersed pair of half-pants is \emph{lasso-induced at $p$} if it is induced by the lasso of a geodesic ray emanating from $p$. Topologically, there are three types of lasso-induced half-pants $\iota(P)$:
\begin{enumerate}
\item 
if $\gamma_p$ is simple and does not intersect $\gamma$, then $\iota(P)$ is embedded;
\item
if $\gamma_p$ self-intersects, but does not intersect $\gamma$, then $\iota(P)$ is a thrice-holed sphere;
\item 
if $\gamma_p$ is simple, but does intersect $\gamma$, then $\iota$ is a torus with a hole.
\end{enumerate}

We call the respective images of the cuff or zipper of $P$ under the isometric immersion $\iota$ the cuff or zipper of $\iota(P)$. The case where $\gamma_p$ is not simple and intersects $\gamma$ does not arise because the spoke of the lasso is then forced to  intersect its loop at least twice (counted with multiplicity), thereby contradicting the definition of a lasso.
\begin{figure}[h]
\begin{center}
\includegraphics[scale=0.75]{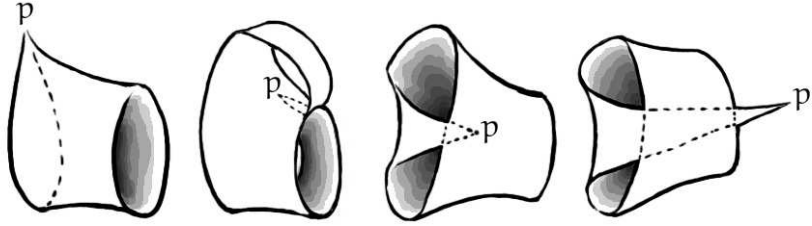}
\caption{The left-most three figures are examples of lasso-induced half-pants, whereas the last is not.}
\label{prototypes}
\end{center}
\end{figure}


\subsection{The main theorem}

There are $2\pi$ worth of directions emanating from $p$, and the extended Birman-Series theorem tells us that almost all geodesics shot out from $p$ will be self-intersecting. Lemma \ref{lasso} tells us that the set of all self-intersecting geodesic rays emanating from $p$ may be partitioned based on which lasso-induced pair of half-pants it induces, and this produces a partition of $2\pi$.
\newline
\newline
Lemma \ref{lies-within} then says that for each \emph{embedded} pair of half-pants $P$, there is
\begin{align*}
2\mathrm{arcsin}\left(\frac{\cosh(\frac{\ell\gamma}{2})}{\cosh(\frac{\ell\gamma_p}{2})}\right)-
2\mathrm{arcsin}\left(\frac{\sinh(\frac{\ell\gamma}{2})}{\sinh(\frac{\ell\gamma_p}{2})}\right)
\end{align*}
worth of directions from $p$ that shoot out geodesics whose lassos lie in $P$. Since $P$ is topologically an annulus, each such lasso must induce $P$.
\newline
\newline
In the case that a lasso-induced pair of half-pants $P$ is not embedded, it is possible for a lasso to lie within $P$ but not to be launched within one of its spiral-angle regions. Fortunately, such lassos cannot induce $P$ because their loops have non-zero algebraic intersection with the cuff of $P$ - which is necessarily homotopic to the loop of any lasso that induces $P$. Therefore, the spiral-angle does not under-count the lassos which induce $P$. However, it is also possible for a lasso contained in $P$ to not induce $P$. This means that the spiral-angle over-counts the lassos which induce $P$, and we need only to compute and subtract the angle of the regions corresponding to these non-inducing lassos to produce a McShane-type identity.

\begin{thm}
Given a closed hyperbolic surface $S$ with marked point $p$, let $\mathcal{HP}(S,p)$ denote the collection of half-pants lasso-induced at $p$. We define the real function $\mathrm{Gap}:\mathcal{HP}(S,p)\rightarrow[0,\pi]$ to output the \emph{gap-angle} of the directions from $p$ that shoot out geodesics whose lassos lie in $P$. Then,
 
\begin{align}
\sum_{P\in\mathcal{HP}(S,p)}\mathrm{Gap}(P)=2\pi.\label{mainthm}
\end{align}
\end{thm}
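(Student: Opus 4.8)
The plan is to read the constant $2\pi$ on the right-hand side as the total mass of the angular (length-$2\pi$) Borel measure on the circle $U_pS$ of directions at $p$ --- legitimate because $p$, being a smooth point, is a cone point of angle $2\pi$ --- and then to recover that same mass by summing over the blocks of a countable partition of $U_pS$ indexed by $\mathcal{HP}(S,p)$. Write $|E|$ for the angular measure of a set $E\subseteq U_pS$, and for $v\in U_pS$ let $\alpha_v$ denote the geodesic ray issuing from $p$ in direction $v$.

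First I would invoke the corollary of Theorem \ref{bf_extended} concerning directions from $p$: the set $\mathcal{N}\subseteq U_pS$ of those $v$ for which $\alpha_v$ is simple has $|\mathcal{N}|=0$. For every $v\notin\mathcal{N}$ the ray $\alpha_v$ has a well-defined lasso, and Lemma \ref{lasso} assigns to that lasso a \emph{unique} isometrically immersed pair of half-pants $P(v)$ containing it, which is by definition an element of $\mathcal{HP}(S,p)$. The set $\mathcal{HP}(S,p)$ is countable, since $P(v)$ is determined by the based homotopy class of the curve $\gamma_p$ built from the lasso in Lemma \ref{lasso}, and these classes form a subset of $\pi_1(S,p)$. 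Consequently $U_pS\setminus\mathcal{N}=\bigsqcup_{P\in\mathcal{HP}(S,p)}V_P$ is a disjoint union over a countable index set, where $V_P:=\{v\notin\mathcal{N}:P(v)=P\}$, and $\mathrm{Gap}(P)=|V_P|$ by definition.

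The substantive point is to check that each $V_P$ is measurable and that $|V_P|\le\pi$. When $P$ is embedded this is immediate from Lemma \ref{lies-within}: $V_P$ is exactly the union of the two open spiral-angle arcs of the zipper of $\iota(P)$, so $|V_P|=2\cdot(\text{spiral-angle of }P)$, and by Lemma \ref{spiral-angle} the spiral-angle is a difference of two quantities in $[0,\tfrac{\pi}{2}]$ (the arguments of the two $\mathrm{arcsin}$'s lie in $[0,1]$ by the right-angled quadrilateral relations), hence lies in $[0,\tfrac{\pi}{2}]$ and $|V_P|\in[0,\pi]$. When $\iota(P)$ is merely immersed, one lifts $\alpha_v$ to the cover of $S$ corresponding to $\iota(P)$ so that Lemma \ref{lies-within} still applies there: $V_P$ is then contained in the open union of spiral-angle arcs of $\iota(P)$ (so this region never under-counts the directions inducing $P$), and $V_P$ is obtained from that region by deleting the directions whose lasso, though contained in $\iota(P)$, has loop with nonzero algebraic intersection number with the cuff and therefore does not induce $P$. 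Showing that this deleted set differs from an open set by a null set --- for which it suffices to know that the first self-intersection of $\alpha_v$ is transverse for almost every $v$, so that $v\mapsto P(v)$ is locally constant off a null set --- is the \emph{delicate step}, and the one I expect to cost the most work; everything else is formal.

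Granting measurability of the $V_P$, the identity follows from countable additivity of the angular measure:
\[
\sum_{P\in\mathcal{HP}(S,p)}\mathrm{Gap}(P)=\sum_{P\in\mathcal{HP}(S,p)}|V_P|=\bigl|U_pS\setminus\mathcal{N}\bigr|=|U_pS|=2\pi.
\]
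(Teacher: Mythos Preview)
Your proposal is correct and follows essentially the same route as the paper: the corollary to Theorem~\ref{bf_extended} gives full measure to self-intersecting directions, Lemma~\ref{lasso} assigns each such direction a unique induced half-pants, and countability of $\mathcal{HP}(S,p)$ together with countable additivity of the angular measure yields $2\pi$; the paper is in fact less explicit than you are about measurability of the $V_P$. One small correction in the non-embedded case: your characterization of the directions to delete from the spiral region via ``loop with nonzero algebraic intersection number with the cuff'' is actually the paper's argument against \emph{under}-counting (it is lassos in $\iota(P)$ launched \emph{outside} the spiral region that are ruled out this way), whereas the \emph{over}-counted directions inside the spiral region are those whose ray self-intersects prematurely in the overlap triangle $\triangle$ where $\iota$ fails to be injective --- but, as you correctly observe, this distinction is irrelevant to the identity itself, which follows purely from the partition structure.
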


We close this section by defining and proving the $\mathrm{Gap}$ function in terms of explicit length parameters on the input pairs of lasso-induced half-pants $P$.
\newline
\newline
For embedded pairs of half-pants $P\in\mathcal{HP}(S,p)$, we know from previous discussions that the gap-angle is double the spiral-angle, as given by \eqref{bangle}.
\newline
\newline
When $P$ is topologically a thrice-holed torus, we need two pieces of geometric information from $P$ to define its gap-angle. First we must know the position of $p\in P$, which we specify using two parameters $\tau$ and $\delta$: we know that $P$ is the isometric immersion of a unique pair of half-pants $\tilde{P}$. There are two preimages for $p$ in $\tilde{P}$ and there is a unique way to reach the preimage of $p$ on the interior of $\tilde{P}$ by launching orthogonally from the cuff of $\tilde{P}$ as per the black dotted line in the left figure in figure \ref{definedt}. We set $\tau\in[0,\ell\gamma)$ to parametrize the position of the launching point on the cuff, so that the point on the cuff which orthogonally projects to the tip of the zipper is set to $0$; the parameter $\delta$ then denotes the distance between the interior preimage of $p$ and the cuff of $\tilde{P}$.

\begin{figure}[h]
\begin{center}
\includegraphics[scale=0.60]{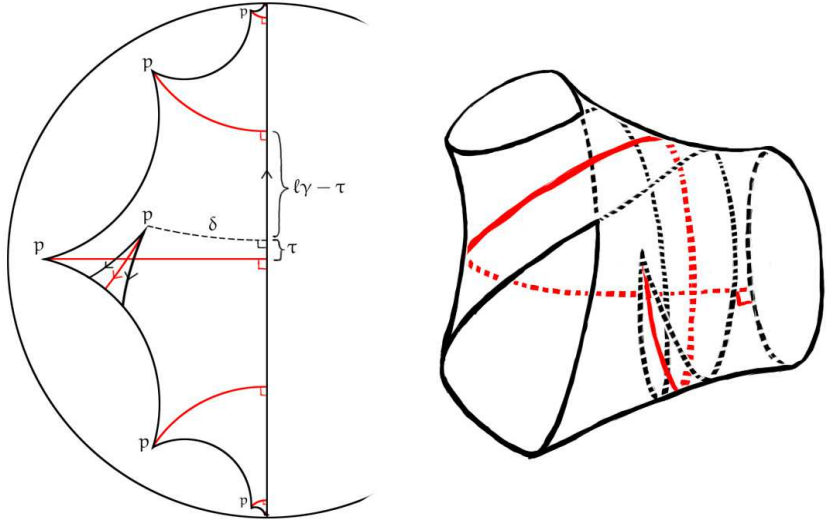}
\caption{The parameter $n$ is $-1$ in this case because it goes against the orientation on the cuff in which $\tau$ is increasing.}
\label{definedt}
\end{center}
\end{figure}

The second piece of information we require counts (with sign) how many times the tip of the zipper of $P$ wraps around itself. Specifically, consider the unique shortest geodesic $\beta$ between the boundary/zipper preimage of $p$ in $\tilde{P}$ and the cuff of $\tilde{P}$ (as shown in red). We define $n$ to be the number of times $\iota(\beta)$ intersects itself, signed to be positive if $\beta$ shoots out from $p$ in the same direction that $\tau$ is increasing, and negative in the direction that $\tau$ is decreasing. We refer to figure \ref{definedt}  and its caption for an example. Note that specifying these parameters does not specify the whole geometry of $P$. 
\newline
\newline
Given this setup, if $P$ is topologically a thrice-holed sphere and $n=0$, then the gap-angle of $P$ is:
\begin{align}
&\mathrm{Gap}(P)=\mathrm{Gap}(\ell\gamma,\ell\gamma_p,\tau,\delta,n=0)\notag\\
&=\mathrm{max}\left\{\Theta(\delta,\tau,\mathrm{arccosh}\left(\frac{\sinh(\frac{\ell\gamma}{2})}{\sinh(\frac{\ell\gamma_p}{2})}\right))-\mathrm{arcsin}\left(\frac{\sinh(\frac{\ell\gamma}{2})}{\sinh(\frac{\ell\gamma_p}{2})}\right),0\right\}\notag \\
&+\mathrm{max}\left\{\Theta(\delta,\ell\gamma-\tau,\mathrm{arccosh}\left(\frac{\sinh(\frac{\ell\gamma}{2})}{\sinh(\frac{\ell\gamma_p}{2})}\right))-\mathrm{arcsin}\left(\frac{\sinh(\frac{\ell\gamma}{2})}{\sinh(\frac{\ell\gamma_p}{2})}\right),0\right\},
\end{align}
where $\Theta(x,y,z)$ is defined by:
\begin{align*}
\Theta(x,y,z)=\frac{1}{2}\mathrm{arccos}\left(\frac{2(\cosh(x)\cosh(y)\sinh(z)-\sinh(x)\cosh(z))^2}{(\cosh(x)\cosh(y)\cosh(z)-\sinh(x)\sinh(z))^2-1)}-1\right).
\end{align*}
And if $n\neq0$, then the gap-angle of $P$ is:
\begin{align}
&\mathrm{Gap}(P)=\mathrm{Gap}(\ell\gamma,\ell\gamma_p,\tau,\delta,n)=
\Theta(\delta,|n\ell\gamma-\tau|,\mathrm{arccosh}\left(\frac{\sinh(\frac{\ell\gamma}{2})}{\sinh(\frac{\ell\gamma_p}{2})}\right))\notag
\\
&-\mathrm{max}\left\{\mathrm{arcsin}\left(\frac{\sinh(\frac{\ell\gamma}{2})}{\sinh(\frac{\ell\gamma_p}{2})}\right),
\Theta(\delta,|n\ell\gamma-\tau|-\ell\gamma,\mathrm{arccosh}\left(\frac{\sinh(\frac{\ell\gamma}{2})}{\sinh(\frac{\ell\gamma_p}{2})}\right))\right\}.
\end{align}

Now for the case when $P$ is topologically a one-holed torus, the parameters $\tau$ and $\delta$ are similarly defined. The gap-angle is:
\begin{align}
&\mathrm{Gap}(P)=\mathrm{Gap}(\ell\gamma,\ell\gamma_p,\tau,\delta)\notag\\
&=2\mathrm{arcsin}\left(\frac{\cosh(\frac{\ell\gamma}{2})}{\cosh(\frac{\ell\gamma_p}{2})}\right)-\Theta(\delta,\ell\gamma\left\lceil\frac{\Psi-\tau}{\ell\gamma}\right\rceil+\tau,\mathrm{arccosh}\left(\frac{\sinh(\frac{\ell\gamma}{2})}{\sinh(\frac{\ell\gamma_p}{2})}\right))\notag \\
&-\Theta(\delta,\ell\gamma\left\lceil\frac{\Psi-(\ell\gamma-\tau)}{\ell\gamma}\right\rceil+\ell\gamma-\tau,\mathrm{arccosh}\left(\frac{\sinh(\frac{\ell\gamma}{2})}{\sinh(\frac{\ell\gamma_p}{2})}\right)),
\end{align}
where $\Psi$ is given by:
\begin{align*}
\Psi=\frac{1}{2}\log\left(\frac{\cosh^2(\delta)}{\sinh^2(\frac{\ell\gamma}{2})}-\frac{\cosh^2(\delta)}{\sinh^2(\frac{\ell\gamma_p}{2})}
\right).
\end{align*}

\subsection{Gap-angle calculations}

One trigonometric identity that we employ in this subsection that is not given in \cite{buser}, but may be derived from techniques outlined in chapter $2$ of \cite{buser}, relates to the $\Theta$ function:
\begin{align*}
\Theta(x,y,z)=\frac{1}{2}\mathrm{arccos}\left(\frac{2(\cosh(x)\cosh(y)\sinh(z)-\sinh(x)\cosh(z))^2}{(\cosh(x)\cosh(y)\cosh(z)-\sinh(x)\sinh(z))^2-1)}-1\right),
\end{align*}
where $\Theta$ measures one of the non-right angles as drawn in figure \ref{theta}.\newline
\newline

\begin{figure}[h]
\begin{center}
\includegraphics[scale=0.60]{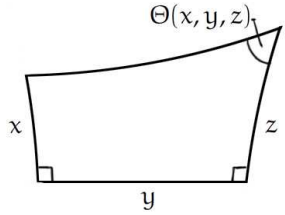}
\caption{Defining the $\Theta$ function.}
\label{theta}
\end{center}
\end{figure}

Since we have already covered the case when a lasso-induced pair of half-pants $P$ is embedded, we commence with the case where $P$ is a thrice-holed sphere with twisting number $n=0$. One way of thinking about a non-embedded pair of half-pants is to treat it as a pair of half-pants $P$ with a small triangular wedge $\triangle$ marked out on $P$ (as illustrated in figure \ref{0_3_universal}) indicating where the $P$ overlaps itself. A geodesic launched from $p\in P$ within the spiral-angle regions induces a lasso which induces $P$ unless it meets itself prematurely in $\triangle$. And any geodesic that meets $\triangle$ prior to self-intersecting normally must self-intersect prematurely since hyperbolic bigons don't exist. Therefore, the condition of meeting $\triangle$ prior to self-intersecting classifies all geodesics which need to be discounted from the spiral-angle to obtain the gap-angle. Let's consider this on the universal cover, as shown in figure \ref{0_3_universal}:

\begin{figure}[h]
\begin{center}
\includegraphics[scale=0.80]{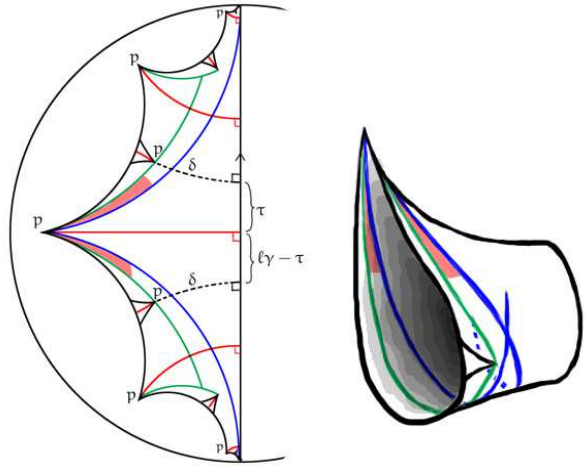}
\caption{The universal cover of a pair of lasso-induced half-pants homeomorphic to a thrice-holed sphere for $n=0$.}
\label{0_3_universal}
\end{center}
\end{figure}

Geodesics launched from a lift of $p$ which hit the nearest lifts of $\triangle$ result in geodesics which prematurely intersect and hence are excluded from the gap-angle of $P$. However, launched geodesics which meet other lifts of $\triangle$ without meeting the adjacent ones must intersect itself (hence forming a $P$-inducing lasso) prior to hitting $\triangle$, and so we see that our gap angle is given by the red regions in figure \ref{0_3_universal} and figure \ref{0_3_trig}.

\begin{figure}[h]
\begin{center}
\includegraphics[scale=0.60]{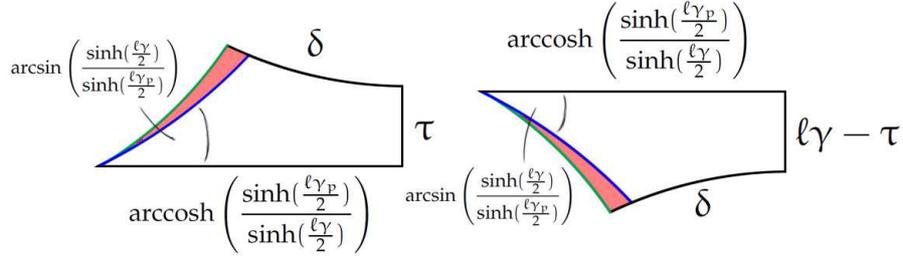}
\caption{Hyperbolic quadrilaterals from the universal cover of $P$.}
\label{0_3_trig}
\end{center}
\end{figure}

Therefore, the angle region is given by:
\begin{align*}
\Theta(\delta,\tau,\mathrm{arccosh}\left(\frac{\sinh(\frac{\ell\gamma}{2})}{\sinh(\frac{\ell\gamma_p}{2})}\right))+
\Theta(\delta,\ell\gamma-\tau,\mathrm{arccosh}\left(\frac{\sinh(\frac{\ell\gamma}{2})}{\sinh(\frac{\ell\gamma_p}{2})}\right))
\end{align*}
subtract the angle corresponding to launched geodesics which leave $P$ via its cuff before intersecting itself (not counting premature intersections). This gives us the $n=0$ term, modulo the introduction of maximum functions to account for the case when one of the spiral-regions is completely blocked off by $\triangle$.

\begin{figure}[h]
\begin{center}
\includegraphics[scale=0.40]{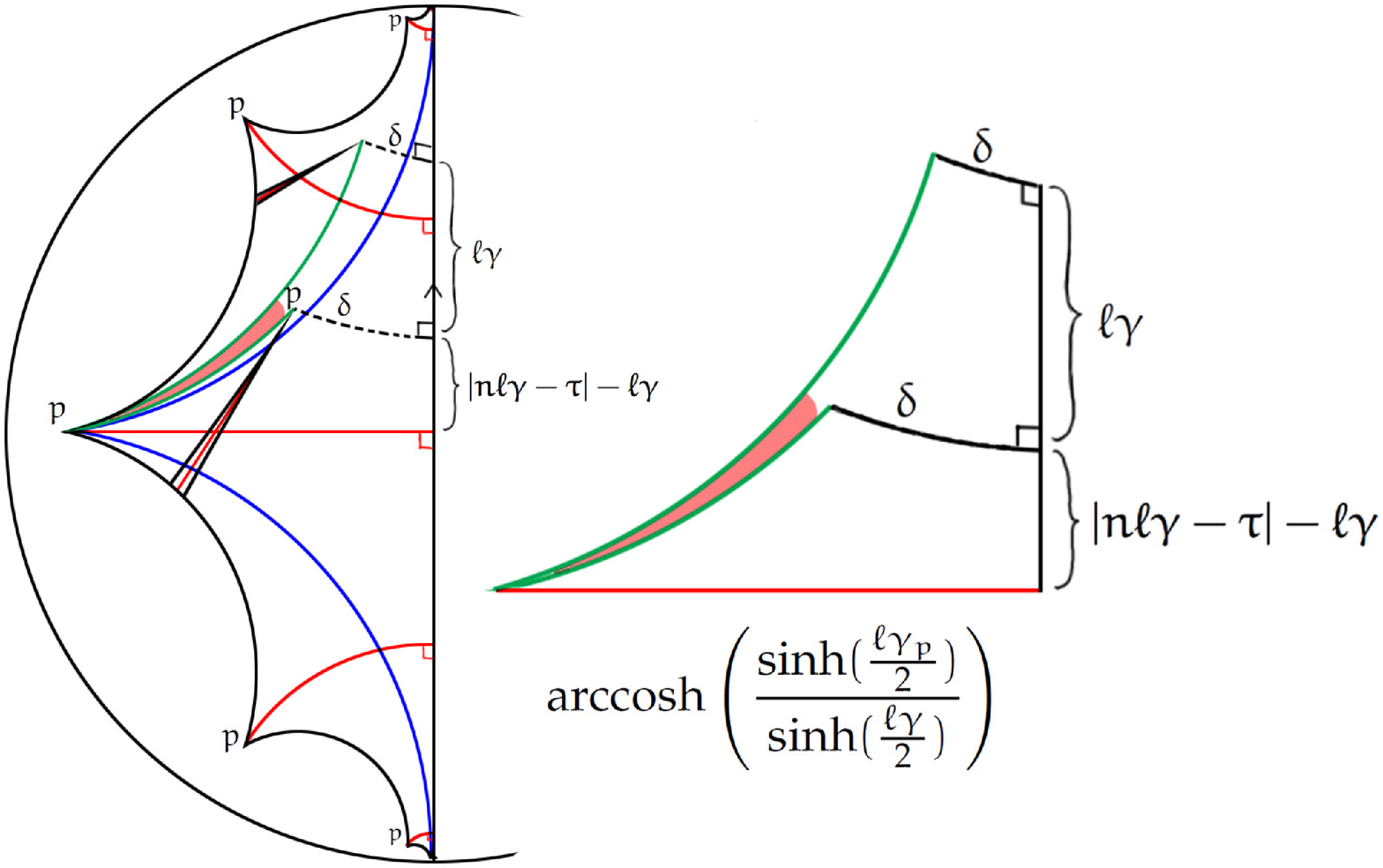}
\caption{The universal cover of a pair of lasso-induced half-pants homeomorphic to a thrice-holed sphere for $n\neq0$.}
\label{0_3_trig_n_not_0}
\end{center}
\end{figure}

For $n\neq 0$, we first note that one of our spiral-angle regions is completely blocked off by the overlapping triangle $\triangle$. When looked at on the universal cover of $P$, the geodesics which must again be discounted from the spiral angle regions are those that meet the nearest lifts of $\triangle$. Although we need to bear in mind that it is possible for an adjacent lift of $\triangle$ to lie outside the spiral-angle region. Combining these facts yields the $n\neq 0$ gap-angle, which we shade in red in figure \ref{0_3_trig_n_not_0}.
\newline
\newline
Finally, we consider the case when $P$ is topologically a one-holed torus. As before, a necessary condition for geodesics launched within the spiral regions to self-intersect prematurely is to enter $\triangle$. And as before, this is a sufficient condition because of the impossibility of bigons and because geodesics launched within the spiral region meet the cuff of $P$ precisely once prior to self-intersecting (they meet the cuff once due $\triangle$ intruding on $P$ via the cuff). Let us consider the universal cover in this case:

\begin{figure}[h]
\begin{center}
\includegraphics[scale=0.35]{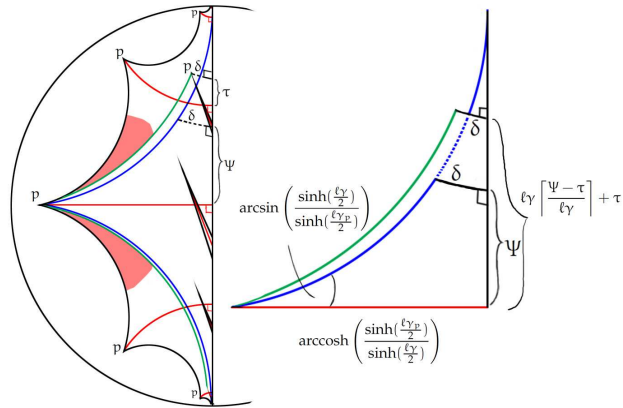}
\caption{The universal cover of a pair of lasso-induced half-pants homeomorphic to a one-holed torus.}
\label{1_1_trig}
\end{center}
\end{figure}

Since we may choose geodesics in the spiral regions which spiral arbitrarily closely to the cuff of $P$, there must be geodesics in the spiral regions which meet the triangular region. Within a single spiral region, as we vary the projection angle from launching almost parallel to the zipper of $P$ to the infinite simple geodesic that spirals around the cuff, there is a phase-shift geodesic (green) for each spiral-region that intersects $\triangle$ in such a way that all geodesics prior to it self-intersect normally, and all those that come after it self-intersect prematurely. This phase-shift geodesic hits the tip of $\triangle$, that is: it must hit $p$. The lifts of $\triangle$ that meet a chosen lift of one of the two phase-shift geodesics is also the first one that intersects the lift of the infinite simple spiraling geodesic (blue) bounding the relevant spiral-region. The figure on the right of figure \ref{1_1_trig} then enables us to calculate the desired gap-angle.

\section{Discussion}
Most of our analysis, including the trigonometry, is reasonably easily adapted to the context of hyperbolic surfaces with geodesic boundaries and small cone-angles. Specifically, a doubling construction may be used to extend our generalized Birman Series theorem to all bordered hyperbolic surfaces. Then, accounting for the fact that some of the angle measure shot from a small cone-angled point $p$ will now be taken up by geodesics which hit a boundary component, we can obtain a McShane identity with different summands depending on whether the relevant pair of half-pants has interior or exterior cuffs. In particular, we obtain the following porism: 

\begin{por}
Consider a finite-area hyperbolic surface $S$ with a single cone-point $p$ of angle $\theta_p\leq\pi$ and possibly with cusps, geodesic boundaries and other cone-points. Let $\mathcal{HP}_{\mathrm{int}}(S,p)$ and $\mathcal{HP}_{\mathrm{ext}}(S,p)$ respectively be the collection of embedded half-pants on $S$ with cuffs $\gamma$, as well as zipper $\zeta$ starting and ending at $p$. Then:
\begin{align}
\sum_{P\in\mathcal{HP}_{\mathrm{int}}(S,p)}\mathrm{arcsin}\left(\frac{\cosh(\frac{\ell\gamma}{2})}{\cosh(\frac{\ell\zeta}{2})}\right)-\mathrm{arcsin}\left(\frac{\sinh(\frac{\ell\gamma}{2})}{\sinh(\frac{\ell\zeta}{2})}\right)&\notag\\
+\sum_{P\in\mathcal{HP}_{\mathrm{ext}}(S,p)}\mathrm{arcsin}\left(\frac{\cosh(\frac{\ell\gamma}{2})}{\cosh(\frac{\ell\zeta}{2})}\right)&=\frac{\theta_p}{2},
\end{align}
where $\ell\gamma$ is the hyperbolic length of $\gamma$  if it's a closed geodesic, $0$ if $\gamma$ is a cusp and $i$ times the angle at $\gamma$ if it's a cone-point.
\end{por}

In particular, if $\theta_p\geq2\pi$, then equation \eqref{mainthm} holds true when we replace $\pi$ with $\frac{1}{2}\theta_p$. On the other hand, when $\theta\leq\pi$, our identity is equivalent to what is already known. The existence of pairs of pants in this scenario means that each of our half-pants in $\mathcal{HP}(S,p)$ is paired with precisely one other, such that they join to give an embedded pair of pants in $S$ with one boundary given by $p$, and the other two labeled as $\gamma_1$ and $\gamma_2$. The (possibly imaginary) lengths of $\gamma_1$ and $\gamma_2$, along with $\theta_p$, completely determine the geometry of this pair of pants. In particular, the length of the zipper $\zeta$ may be calculated using the following relation:
\begin{align}
\cosh^2\left(\frac{\ell\zeta}{2}\right)=\frac{\cosh^2(\frac{\ell\gamma_1}{2})+\cosh^2(\frac{\ell\gamma_2}{2})+2\cos(\frac{\theta_p}{2})\cosh(\frac{\ell\gamma_1}{2})\cosh(\frac{\ell\gamma_2}{2})}{\sin^2(\frac{\theta_p}{2})}\label{conversion}
\end{align}
Substituting this into our summands yields the main theorem of Tan-Wong-Zhang's generalization of McShane's identity to cone-surfaces \cite{tan_zhang_cone}. In fact, up to taking a limit or replacing certain geodesic lengths with complexified ones, many previously known McShane identities are an incarnation of the above porism. As an example, we derive Theorem $1.16$ of \cite{tan_zhang_cone} using algebraic manipulation.

\begin{proof}
Given a pair of half-pants $P_1$ with cuff and zipper respectively given by $\gamma_1$ and $\zeta$, the half-pants $P_1$ must be adjoined with a pair of half-pants $P_2$ with cuff and zipper respectively given by $\gamma_2$ and $\zeta$ as no cone-angles exceed ${\pi}$ by assumption. If $P$ is exterior, we take $\gamma_2$ to be the exterior cuff, then the summand associate to $P$ is:
\begin{align}
\mathrm{arcsin}\left(\frac{\cosh(\frac{\ell\gamma_1}{2})}{\cosh(\frac{\ell\zeta}{2})}\right)-\mathrm{arcsin}\left(\frac{\sinh(\frac{\ell\gamma_1}{2})}{\sinh(\frac{\ell\zeta}{2})}\right)+\mathrm{arcsin}\left(\frac{\cosh(\frac{\ell\gamma_2}{2})}{\cosh(\frac{\ell\zeta}{2})}\right).\label{ext_angle}
\end{align}
By converting $\mathrm{arcsin}$ into $\mathrm{arctan}$ and substituting in equation \eqref{conversion}, we obtain that:
\begin{align*}
\mathrm{arcsin}\left(\frac{\cosh(\frac{\ell\gamma_1}{2})}{\cosh(\frac{\ell\zeta}{2})}\right)&=\mathrm{arctan}\left(\frac{\sin(\frac{\theta_p}{2})\cosh(\frac{\ell\gamma_1}{2})}{\cos(\frac{\theta_p}{2})\cosh(\frac{\ell\gamma_1}{2})+\cosh(\frac{\ell\gamma_2}{2})}\right),\\
\mathrm{arcsin}\left(\frac{\sinh(\frac{\ell\gamma_1}{2})}{\sinh(\frac{\ell\zeta}{2})}\right)&=\mathrm{arctan}\left(\frac{\sin(\frac{\theta_p}{2})\sinh(\frac{\ell\gamma_1}{2})}{\cos(\frac{\theta_p}{2})\cosh(\frac{\ell\gamma_1}{2})+\cosh(\frac{\ell\gamma_2}{2})}\right).
\end{align*}
 Expressing $\mathrm{arctan}$ in terms of natural logarithms then gives us that:
\begin{align*}
 \mathrm{arcsin}\left(\frac{\cosh(\frac{\ell\gamma_1}{2})}{\cosh(\frac{\ell\zeta}{2})}\right)=\frac{1}{2i}\log\left(\frac{\exp(\frac{i\theta_p}{2})\cosh(\frac{\ell\gamma_1}{2})+\cosh(\frac{\ell\gamma_2}{2})}{\exp(\frac{-i\theta_p}{2})\cosh(\frac{\ell\gamma_1}{2})+\cosh(\frac{\ell\gamma_2}{2})}\right).
\end{align*}
Hence \eqref{ext_angle} becomes the summand in theorem $1.16$:
\[
\frac{\theta_p}{2}-\mathrm{arctan}\left(\frac{\sin(\frac{\theta_p}{2})\sinh(\frac{\ell\gamma_1}{2})}{\cos(\frac{\theta_p}{2})\cosh(\frac{\ell\gamma_1}{2})+\cosh(\frac{\ell\gamma_2}{2})}\right).
\]

 On the other hand, if $P$ is interior, then its associated summand is:
\begin{align}
\sum_{k=1,2}\mathrm{arcsin}\left(\frac{\cosh\frac{\ell\gamma_k}{2}}{\cosh\frac{\ell\zeta}{2}}\right)-\mathrm{arcsin}\left(\frac{\sinh(\frac{\ell\gamma_k}{2})}{\sinh(\frac{\ell\zeta}{2})}\right).\
\end{align}
Converting everything to logarithms and replacing $\ell\zeta$ using \eqref{conversion} as above yields:
\begin{align*}
\frac{1}{2i}&\log \exp({i\theta_p})\left(\frac{\cosh(\frac{\ell\gamma_1-i\theta_p}{2})+\cosh(\frac{\ell\gamma_2}{2})}{\cosh(\frac{\ell\gamma_1+i\theta_p}{2})+\cosh(\frac{\ell\gamma_2}{2})}\right)\left(\frac{\cosh(\frac{\ell\gamma_2-i\theta_p}{2})+\cosh(\frac{\ell\gamma_1}{2})}{\cosh(\frac{\ell\gamma_2+i\theta_p}{2})+\cosh(\frac{\ell\gamma_1}{2})}\right)\\
=\frac{1}{i}&\log \exp({\frac{i\theta_p}{2}})\left(\frac{\cosh(\frac{\ell\gamma_1-i\theta_p}{2})+\cosh(\frac{\ell\gamma_2}{2})}{\cosh(\frac{\ell\gamma_2+i\theta_p}{2})+\cosh(\frac{\ell\gamma_1}{2})}\right)\\
=\frac{1}{i}&\log\left(\frac{\exp(\frac{i\theta_p}{2})+\exp(\frac{\ell\gamma_1+\ell\gamma_2}{2})}{\exp(\frac{-i\theta_p}{2})+\exp(\frac{\ell\gamma_1+\ell\gamma_2}{2})}\right)\\
=2&\mathrm{arctan}\left(\frac{\sin(\frac{\theta_p}{2})}{\cos(\frac{\theta_p}{2})+\exp(\frac{\ell\gamma_1+\ell\gamma_2}{2})}\right),
\end{align*}
which is precisely the summand for interior pairs of pants for hyperbolic surfaces with small cone-angles.
\end{proof}


\bibliographystyle{abbrv}
\bibliography{bibtex.bib}

\begin{thebibliography}{10}

\bibitem{birman_series}
J.~S. Birman and C.~Series.
\newblock Geodesics with bounded intersection number on surfaces are sparsely
  distributed.
\newblock {\em Topology}, 24(2):217--225, 1985.

\bibitem{orthospectra}
M.~Bridgeman.
\newblock Orthospectra of geodesic laminations and dilogarithm identities on
  moduli space.
\newblock {\em Geom. Topol.}, 15(2):707--733, 2011.

\bibitem{buser}
P.~Buser.
\newblock {\em Geometry and spectra of compact {R}iemann surfaces}, volume 106
  of {\em Progress in Mathematics}.
\newblock Birkh\"auser Boston Inc., Boston, MA, 1992.

\bibitem{luo_tan}
F.~Luo and S.~P. Tan.
\newblock A dilogarithm identity on moduli spaces of curves.
\newblock page arXiv:1102.2133v2, 2011.

\bibitem{cone_metric}
R.~C. McOwen.
\newblock Point singularities and conformal metrics on riemann surfaces.
\newblock {\em Proceedings of the American Mathematical Society}, 103(1):pp.
  222--224, 1988.

\bibitem{mcshane_thesis}
G.~McShane.
\newblock {\em A remarkable identity for lengths of curves}.
\newblock PhD thesis, may 1991.

\bibitem{mcshane_allcusps}
G.~McShane.
\newblock Simple geodesics and a series constant over {T}eichmuller space.
\newblock {\em Invent. Math.}, 132(3):607--632, 1998.

\bibitem{mcshane_weir}
G.~McShane.
\newblock Weierstrass points and simple geodesics.
\newblock {\em Bull. London Math. Soc.}, 36(2):181--187, 2004.

\bibitem{mcshane_genus2}
G.~McShane.
\newblock Simple geodesics on surfaces of genus 2.
\newblock {\em Ann. Acad. Sci. Fenn. Math.}, 31(1):31--38, 2006.

\bibitem{mirz_simp}
M.~Mirzakhani.
\newblock Simple geodesics and {W}eil-{P}etersson volumes of moduli spaces of
  bordered {R}iemann surfaces.
\newblock {\em Invent. Math.}, 167(1):179--222, 2007.

\bibitem{mirz_proof}
M.~Mirzakhani.
\newblock Weil-{P}etersson volumes and intersection theory on the moduli space
  of curves.
\newblock {\em J. Amer. Math. Soc.}, 20(1):1--23 (electronic), 2007.

\bibitem{mondello_cone}
G.~Mondello.
\newblock Poisson structures on the {T}eichm\"uller space of hyperbolic
  surfaces with conical points.
\newblock In {\em In the tradition of {A}hlfors-{B}ers. {V}}, volume 510 of
  {\em Contemp. Math.}, pages 307--329. Amer. Math. Soc., Providence, RI, 2010.

\bibitem{tan_talk}
S.~P. Tan.
\newblock Length and trace series identities on {T}eichm\"uller spaces and
  character varieties.
\newblock In {\em Geometry, Topology and Dynamics of Character Varieties}, July
  2010.

\bibitem{tan_chat}
S.~P. Tan.
\newblock private communications, August 2010.

\bibitem{tan_zhang_cone}
S.~P. Tan, Y.~L. Wong, and Y.~Zhang.
\newblock Generalizations of {M}c{S}hane's identity to hyperbolic
  cone-surfaces.
\newblock {\em J. Differential Geom.}, 72(1):73--112, 2006.

\bibitem{cone_metric2}
M.~Troyanov.
\newblock Prescribing curvature on compact surfaces with conical singularities.
\newblock {\em Transactions of the American Mathematical Society}, 324(2):pp.
  793--821, 1991.

\bibitem{witten_conjecture}
E.~Witten.
\newblock Two-dimensional gravity and intersection theory on moduli space.
\newblock In {\em Surveys in differential geometry ({C}ambridge, {MA}, 1990)},
  pages 243--310. Lehigh Univ., Bethlehem, PA, 1991.

\end{thebibliography}

\end{document}